\begin{document}

\newtheorem{theorem}{Theorem}[section]
\newtheorem{result}[theorem]{Result}
\newtheorem{fact}[theorem]{Fact}
\newtheorem{example}[theorem]{Example}
\newtheorem{conjecture}[theorem]{Conjecture}
\newtheorem{lemma}[theorem]{Lemma}
\newtheorem{proposition}[theorem]{Proposition}
\newtheorem{corollary}[theorem]{Corollary}
\newtheorem{facts}[theorem]{Facts}
\newtheorem{props}[theorem]{Properties}
\newtheorem*{thmA}{Theorem A}
\newtheorem{ex}[theorem]{Example}
\theoremstyle{definition}
\newtheorem{definition}[theorem]{Definition}
\newtheorem{remark}[theorem]{Remark}
\newtheorem*{defna}{Definition}

\newcommand{\notes} {\noindent \textbf{Notes.  }}
\newcommand{\note} {\noindent \textbf{Note.  }}
\newcommand{\defn} {\noindent \textbf{Definition.  }}
\newcommand{\defns} {\noindent \textbf{Definitions.  }}
\newcommand{\x}{{\bf x}}
\newcommand{\z}{{\bf z}}
\newcommand{\B}{{\bf b}}
\newcommand{\V}{{\bf v}}
\newcommand{\T}{\mathbb{T}}
\newcommand{\Z}{\mathbb{Z}}
\newcommand{\Hp}{\mathbb{H}}
\newcommand{\D}{\mathbb{D}}
\newcommand{\R}{\mathbb{R}}
\newcommand{\N}{\mathbb{N}}
\renewcommand{\B}{\mathbb{B}}
\newcommand{\C}{\mathbb{C}}
\newcommand{\ft}{\widetilde{f}}
\newcommand{\dt}{{\mathrm{det }\;}}
 \newcommand{\adj}{{\mathrm{adj}\;}}
 \newcommand{\0}{{\bf O}}
 \newcommand{\av}{\arrowvert}
 \newcommand{\zbar}{\overline{z}}
 \newcommand{\xbar}{\overline{X}}
 \newcommand{\htt}{\widetilde{h}}
\newcommand{\ty}{\mathcal{T}}
\renewcommand\Re{\operatorname{Re}}
\renewcommand\Im{\operatorname{Im}}
\newcommand{\tr}{\operatorname{Tr}}
\newcommand{\Stab}{\operatorname{Stab}}
\newcommand\capac{\operatorname{cap}}
\newcommand\diam{\operatorname{diam}}
\renewcommand\mod{\operatorname{mod}}

\newcommand{\ds}{\displaystyle}
\numberwithin{equation}{section}

\renewcommand{\theenumi}{(\roman{enumi})}
\renewcommand{\labelenumi}{\theenumi}

\title{Quasiregular semigroups with examples}

\author{Alastair Fletcher}
\address{Department of Mathematical Sciences, Northern Illinois University, DeKalb, IL 60115-2888. USA}
\email{fletcher@math.niu.edu}

\subjclass[2010]{Primary 30C65; Secondary 30D05, 37F10}
\thanks{This work was supported by a grant from the Simons Foundation (\#352034, Alastair Fletcher).}

\begin{abstract}
Rational semigroups were introduced by Hinkkanen and Martin as a generalization of the iteration of a single rational map. There has subsequently been much interest in the study of rational semigroups. Quasiregular semigroups were introduced shortly after rational semigroups as analogues in higher real dimensions, but have received far less attention. Each map in a quasiregular semigroup must necessarily be a uniformly quasiregular map. While there is a completely viable theory for the iteration of uniformly quasiregular maps, it is a highly non-trivial matter to construct them. In this paper, we study properties of the Julia and Fatou sets of quasiregular semigroups and, equally as importantly, give several families of examples illustrating some of the behaviours that can arise.
\end{abstract}

\maketitle

\section{Introduction}

Many aspects of complex dynamics have been carried over to the setting of rational semigroups.
A  rational semigroup $G$ is a collection of rational maps on the Riemann sphere $\overline{\C}$ where the binary operation is function composition. The semigroup is generated by $g_1,g_2,\ldots$ and we write $G = \left <g_1,g_2,\ldots \right >$. The study of rational semigroups is a natural generalization of the iteration theory of a single rational map $g$. In this case, one can ask for the set on which the iterates of $g$ behave stably and the set on which the iterates behave chaotically. These sets are the familiar Fatou set $F(g)$ and Julia set $J(g)$ respectively.

One can define the Fatou and Julia sets for a rational semigroup $G$ via normal families in the same way as for a rational map.
We will give precise definitions below. The study of rational semigroups was initiated by Hinkkanen and Martin \cite{HM} and has been the subject of much study since, see the recent paper \cite{JS} and the references therein.

Currently, there is a great deal of interest in the iteration theory of quasiregular mappings in $\R^n$.
Quasiregular mappings, or mappings of bounded distortion, are the correct generalization of holomorphic functions into $\R^n$, for $n\geq 2$, if one wants to have an interesting function theory. Quasiregular dynamics is then the natural counterpart of complex dynamics in higher dimensions, see \cite{Berg} for an introduction to this theory. 

Here, we will be mostly interested in uniformly quasiregular mappings (henceforth abbreviated to uqr mappings), that is, those for which there is a uniform bound on the distortion of the iterates. Normal family machinery in this setting allows one to define a Fatou set and Julia set for uqr mappings and, moreover, these partition $\R^n$. One can then study quasiregular semigroups generated by, necessarily, uqr maps. This was initiated in \cite{IM} but, as far as the author is aware, there has been little systematic study of the properties of quasiregular semigroups and their Julia and Fatou sets.

In this paper, our purpose is twofold: first, to show that many of the properties that hold for rational semigroups also hold for quasiregular semigroups. Second, we will construct several examples of quasiregular semigroups that illustrate some of the possibilities that can occur. Perhaps one of the reasons that quasiregular semigroups have not received much attention is that it is highly non-trivial to construct even uqr maps in higher dimensions. In dimension two, every uqr map is a quasiconformal conjugate of a holomorphic map, but no such result is true in higher dimensions. Consequently, it is not obvious that there are many quasiregular semigroups to apply the theory to. We will exhibit several families of quasiregular semigroups.

The outline of the paper is as follows. In section two, we will cover preliminary material on quasiregular mappings and tools we will need. In section three, we will discuss some of the basic properties of quasiregular semigroups. In section four, we will prove that given a condition that is satisfied by, for example, finitely generated quasiregular semigroups, the Julia set is uniformly perfect. In section five, we will exhibit several classes of quasiregular semigroups, mostly based on families of solutions to the Schr\"oder equation.

\section{Preliminaries}

\subsection{Quasiregular mappings}

A {\it quasiregular mapping} in a domain $U\subset \R^n$ for $n\geq 2$ is a continuous mapping in the Sobolev space $W^1_{n,loc}(U)$ where there is a uniform bound on the distortion, that is, there exists $K\geq 1$ such that
\[|f'(x)|^n \leq KJ_f(x)\]
almost everywhere in $U$. The minimum such $K$ for which this inequality holds is called the {\it outer dilatation} and denoted by $K_O(f)$. As a consequence of this, there is also $K' \geq 1$ such that 
\[J_f(x) \leq K' \inf_{|h|=1}|f'(x)h|^n\]
holds almost everywhere in $U$. The minimum such $K'$ for which this inequality holds is called the {\it inner dilatation} and denoted by $K_I(f)$. If $K= \max \{K_O(f), K_I(f) \}$, then $K=K(f)$ is the maximal dilatation of $f$. A $K$-quasiregular mapping is a quasiregular mapping for which $K(f) \leq K$. An injective quasiregular mapping is called quasiconformal.

We will often identify $\R^n \cup \{ \infty \}$ with the sphere $S^n$ and use the chordal metric $\chi$. If $A$ is a M\"obius map sending the point at infinity to $0$, then we can consider quasiregular mappings that are either defined at infinity, or have poles, by pre- or post-composing by $A$ and applying the above condition. Such mappings are sometimes called quasimeromorphic, but we will keep the nomenclature quasiregular on $S^n$.

The branch set $B_f$ consists of the points where $f$ is not locally injective. The branch set for a non-injective quasiregular mapping is always non-empty in dimension at least three. 
We refer to \cite{Rickman} for many more details on the foundations of quasiregular mappings, but we note here the following analogue of Picard's Theorem due to Rickman.

\begin{theorem}[\cite{Rickman}, Theorem IV.2.1]
\label{rickman}
For every $n\geq 2$ and $K\geq 1$, there exists a positive integer $q=q(n,K)$ which depends only on $n$ and $K$, such that the following holds. Every $K$-quasiregular mapping $f: \R^n \to S^n \setminus \{a_1,\ldots, a_m \}$ is constant whenever $m\geq q$ and $a_1,\ldots, a_m$ are distinct points in $S^n$.
\end{theorem}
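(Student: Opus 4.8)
The classical proof of Picard's theorem rests on the hyperbolic metric carried by the universal cover of a thrice-punctured sphere, and this has no counterpart in dimension $n\geq 3$; so the plan is instead to follow the nonlinear potential theory route (the approach of Eremenko and Lewis, which recasts Rickman's original value-distribution argument). The organizing principle is that the conformally natural exponent for $K$-quasiregular maps of $\R^n$ is $p=n$: the $n$-Laplacian is conformally invariant and its fundamental solution is logarithmic, $c\log|x|$, exactly as in the planar case, so $n$-harmonic (more generally $\mathcal{A}$-harmonic) potential theory plays the role that logarithmic potential theory plays in the plane.

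First I would invoke the composition principle from nonlinear potential theory: if $v$ is $n$-harmonic on an open subset of $S^n$ and $f$ is $K$-quasiregular, then $v\circ f$ is $\mathcal{A}$-harmonic, the branch set having $n$-capacity zero and hence being removable for the equation, where the structure function $\mathcal{A}$ has ellipticity constants depending only on $n$ and $K$. Assuming $f$ non-constant and omitting the distinct points $a_1,\ldots,a_m$, I would, after a chordal (M\"obius) normalization, build from the omitted configuration suitable $n$-harmonic functions on $S^n\setminus\{a_1,\ldots,a_m\}$ carrying logarithmic singularities at the $a_i$, and pull them back by $f$. Because $f$ never attains any $a_i$, the pullbacks are genuine, singularity-free $\mathcal{A}$-harmonic functions on all of $\R^n$, with structure constants uniform in $i$.

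The heart of the matter is a Harnack-type inequality for nonnegative $\mathcal{A}$-harmonic functions, applied on the dyadic annuli $\{R<|x|<2R\}$, whose constant depends only on $n$ and $K$. On the one hand, each omitted value forces the corresponding potential to contribute a definite amount of growth and capacity; on the other hand, the Harnack inequality bounds the oscillation and growth of the pulled-back potentials from above, uniformly in $m$, since $f$ maps into the common target $S^n\setminus\{a_1,\ldots,a_m\}$. Balancing these lower and upper estimates caps the number of omitted values by a constant $q=q(n,K)$; equivalently, if $m\geq q$ the two estimates can be reconciled only when the potentials are constant, which forces $f$ itself to be constant.

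I expect the main obstacle to be purely quantitative: securing the Harnack inequality and the capacity and growth estimates with constants that depend on $n$ and $K$ alone, and arranging the bookkeeping so that the omitted values, the point at infinity, and the unavoidable branch set of $f$ in dimensions $n\geq 3$ are all handled uniformly. It is precisely the effectiveness of the bound $q(n,K)$, rather than its mere finiteness, that gives the theorem its depth and places it well beyond any soft normal-families argument; this is the part I would expect to occupy the bulk of the work.
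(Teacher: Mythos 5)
First, a point of comparison that matters: the paper does not prove this statement at all. It is Rickman's Picard theorem, imported verbatim from Rickman's monograph (Theorem IV.2.1) and used throughout the paper only as a black box supplying the constant $q(n,K)$ (for Miniowitz's Montel-type theorem and the propositions on exceptional points). So there is no proof in the paper to measure your attempt against; your proposal has to stand or fall on its own.

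On its own terms, you have identified a legitimate route: the nonlinear-potential-theoretic proof of Eremenko and Lewis, later streamlined by Lewis via Harnack's inequality, which is indeed the standard alternative to Rickman's original value-distribution argument. The ingredients you name are the correct ones: the morphism property (the pullback $v\circ f$ of an $n$-harmonic function under a $K$-quasiregular map is $\mathcal{A}$-harmonic with structure constants depending only on $n$ and $K$), the observation that omitted values make the pulled-back logarithmic potentials globally defined on $\R^n$, and a Harnack inequality whose constant depends only on $n$ and $K$. But there is a genuine gap exactly where the theorem lives. The step ``balancing these lower and upper estimates caps the number of omitted values'' is asserted, not argued, and the heuristic offered for the lower bound --- that each omitted value ``forces the corresponding potential to contribute a definite amount of growth and capacity'' --- is not the actual mechanism and is not even true for a single potential: an individual pullback $u_i = \log\bigl(1/|f-a_i|\bigr)$ can remain bounded, since nothing forces $f$ to approach any particular omitted value. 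What the real proof does, and what your sketch omits, is: (1) a compactness theorem for families of $\mathcal{A}$-harmonic functions with uniform structure constants (the main content of Eremenko--Lewis); (2) a rescaling of the $u_i$ along a sequence of radii, producing in the limit nonnegative $\mathcal{A}$-subsolutions whose positivity sets are pairwise disjoint (disjointness coming from the fact that $f(x)$ cannot be close to two distinct omitted values simultaneously); and (3) a counting lemma showing that the number of nonconstant nonnegative subsolutions on $\R^n$ with pairwise disjoint supports is bounded in terms of the ellipticity constants, via Harnack plus a growth/oscillation estimate. Step (3) is where the integer $q(n,K)$ actually comes from; without it your argument produces no bound at all, and it must produce a genuinely $K$-dependent one, since Rickman also showed that $q(3,K)\to\infty$ as $K\to\infty$, so no mechanism yielding a $K$-independent count could be correct. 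As written, then, your proposal is a sound plan pointing at the right literature, but not a proof.
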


We will also require the following result showing that quasiregular maps have bounded linear distortion. Given $f:U \to \R^n$ a $K$-quasiregular map and $x \in U$, for $r < d(x , \partial U)$ define
\begin{equation}
\label{eq:Ll} 
L(x,r) = \max_{|y-x| = r} |f(y) - f(x)|, \quad l(x,r) = \min_{|y-x|=r} |f(y)-f(x)|.
\end{equation}
Then the linear distortion of $f$ at $x$ is
\[ H(x,f) = \limsup_{r\to 0 } \frac{L(x,r)}{l(x,r)}.\]

\begin{theorem}[\cite{Rickman}, Theorem II.4.3]
\label{thm:linear}
Let $f:U \to \R^n$ be a non-constant $K$-quasiregular map. Then for every $x\in U$, there exists $C$ depending only on the product $i(x,f)K_O(f)$ and $n$ so that
\[ H(x,f) \leq C.\]
\end{theorem}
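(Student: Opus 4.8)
The plan is to bound the ratio $L(x,r)/l(x,r)$ by a constant depending only on the product $i(x,f)K_O(f)$ and on $n$, uniformly for all sufficiently small $r>0$, and then pass to the $\limsup$ as $r\to 0$. After translating so that $x=0$ and $f(x)=0$, I would fix $r$ small enough that $\overline{B}(0,r)$ lies in the domain, that $f^{-1}(0)\cap B(0,r)=\{0\}$, and that the component $W$ of $f^{-1}(B(0,l))$ containing $0$ is a normal neighbourhood on which $f$ is proper of local degree $i=i(0,f)$, where $l=l(0,r)$ and $L=L(0,r)$ are as in \eqref{eq:Ll}. The open mapping property forces $B(0,l)\subseteq f(B(0,r))\subseteq B(0,L)$, so that the distortion is encoded in how eccentric the preimage configuration $W\subseteq B(0,r)$ is allowed to be.

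The engine of the argument is the pair of capacity (equivalently, modulus) inequalities for quasiregular maps, in which the local degree of a normal domain enters as a multiplicity factor. For a condenser $E$ supported in a normal neighbourhood of degree $i$ one has, schematically, $\capac(fE)\le K_O(f)\,\capac(E)$ and $\capac(E)\le i\,K_I(f)\,\capac(fE)$; the second is tight already for the model map $z\mapsto z^i$, which shows why the index must appear. I would apply these to transfer a capacity estimate for the round target rings $\{\,l<|w|<L\,\}$, whose capacity is exactly $\omega_{n-1}(\log(L/l))^{1-n}$, back to the domain configuration inside the \emph{round} ball $B(0,r)$, where the center $0$ and the extremal boundary points of $\partial B(0,r)$ provide the reference geometry. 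The point is that $\capac(E)$ and $M$ of the associated path families are controlled in the round ball, so the inequalities pin down $\capac(\{\,l<|w|<L\,\})$ from below by a quantity that decays in $i\,K_O(f)$.

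Having bounded the target capacity from below, I would invoke the extremal ring functions — the Gr\"otzsch and Teichm\"uller capacities and the comparison principle that the round (or radial-segment) configuration is extremal among rings separating a continuum reaching radius $l$ from a continuum reaching radius $L$. Since $\omega_{n-1}(\log(L/l))^{1-n}$ is strictly decreasing in $L/l$, a lower bound on this capacity converts directly into an upper bound $L/l\le C(i\,K_O(f),n)$, and taking $\limsup_{r\to 0}$ gives $H(x,f)\le C$.

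The main obstacle is the handling of branching. Away from the branch set the whole scheme reduces to the classical quasiconformal distortion estimate with $i=1$, where only the dilatation matters; at a branch point one must carry the local degree through Poletsky's and V\"ais\"al\"a's inequalities so that it enters the final constant \emph{precisely} through the product $i(x,f)K_O(f)$ rather than through $i$ and the two dilatations separately — this requires the sharp relations among $K_I(f)$, $K_O(f)$ and the index and is where the bookkeeping is delicate. A secondary but genuine difficulty is showing that the relevant preimage continuum (the closure of $W$) is non-degenerate and large enough that the capacity comparison is effective, and verifying that all the estimates are uniform in $r$ so that the $\limsup$ is actually controlled.
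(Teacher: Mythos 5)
First, a point of orientation: the paper does not prove this statement at all --- it is imported verbatim from Rickman's book (Theorem II.4.3) --- so your proposal can only be measured against the standard proof there. Your overall architecture does match that proof: transfer a condenser supported on a normal neighbourhood of $x$ to the round ring $\{\,l<|w|<L\,\}$ in the image, bound the domain capacity from below by a dimensional constant using the extremal (Gr\"otzsch/Teichm\"uller) rings, and use the monotonicity of $t\mapsto \omega_{n-1}(\log t)^{1-n}$ to convert the capacity bound into $L/l\le C$. However, your ``engine'' is miswritten in a way that matters. For a condenser $E=(A,C)$ with $A$ a normal domain of local degree $i$, the two correct inequalities are the $K_O$-inequality of Martio--Rickman--V\"ais\"al\"a, $\capac E \le i\,K_O(f)\,\capac fE$ (equivalently $M(\Gamma)\le K_O(f)\,N(f,A)\,M(f\Gamma)$), and V\"ais\"al\"a's inequality, $\capac fE \le \bigl(K_I(f)/i\bigr)\capac E$. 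You state instead ``$\capac(fE)\le K_O(f)\capac(E)$'' and ``$\capac(E)\le i\,K_I(f)\,\capac(fE)$'': in both, the dilatations sit on the wrong sides. The inequality your plan actually needs --- a lower bound for the image capacity decaying like $1/(iK_O(f))$ --- is precisely the $K_O$-inequality, which you never state; with your inequalities as written the constant would come out depending on $i\,K_I(f)$. This is not fatal to the qualitative statement (since $K_I\le K_O^{n-1}$, a bound increasing in $iK_I$ is also a bound in terms of $n$ and $iK_O$), but it shows that the ``delicate bookkeeping with Poletsky's and V\"ais\"al\"a's inequalities'' you single out as the main obstacle is a red herring: those inequalities carry $K_I$ and cannot produce the $K_O$-pairing, whereas the $K_O$-inequality --- which is elementary and is the tool Rickman actually uses --- hands you the product $i(x,f)K_O(f)$ in one line.

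The more serious gap is the one you dismiss as ``secondary'': the non-degeneracy of the domain continua. Without it the scheme does not close, because the Teichm\"uller-type lower bound for the domain capacity degenerates when the inner plate is small compared to $r$, and nothing in your sketch rules this out. The fix is a geometric fact about normal neighbourhoods that is the real heart of the proof: if $U$ is a normal neighbourhood of $x$, then every component of $f^{-1}\bigl(B(f(x),t)\bigr)\cap U$ maps \emph{onto} $B(f(x),t)$ and hence contains a preimage of $f(x)$; since $\overline{U}\cap f^{-1}(f(x))=\{x\}$, that component is unique and contains $x$. Consequently a point $z\in S(x,r)$ realizing $l=l(x,r)$ lies in $\overline{U(x,f,l)}$ (it lies in $U(x,f,t)$ for every $t>l$), so the inner continuum $\overline{U(x,f,l)}$ joins $x$ to the sphere $S(x,r)$ and has diameter at least $r$; a companion argument, via the maximum and minimum principles for $|f-f(x)|$, is needed to produce a non-degenerate outer plate through the maximizing point $y$, together with uniformity of all estimates as $r\to 0$. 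These verifications are exactly the content of the proof in Rickman's book; your proposal correctly names them as difficulties but does not carry them out, so as it stands it is a plan rather than a proof.
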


\subsection{Uqr maps and normal families}

For $m\geq 1$, we write $f^m$ for the $m$-fold iterate of $f$. A mapping is called {\it uniformly $K$-quasiregular}, or $K$-uqr for short, if $K(f^m) \leq K$ for all $m\geq 1$.

The reason uqr mappings are closely related to holomorphic functions in the plane is the following analogue of Montel's Theorem.

\begin{theorem}[\cite{Miniowitz}]
\label{mini}
Let $\mathcal{F}$ be a family of $K$-quasiregular mappings in a domain $G\subset S^n$ and let $q=q(n,K)$ be Rickman's constant from Theorem \ref{rickman}. If there exist distinct points $a_1,\ldots, a_q \in S^n$ such that $f(G) \cap \{a_1,\ldots, a_q\} = \emptyset$ for all $f\in \mathcal{F}$, then $\mathcal{F}$ is a normal family.
\end{theorem}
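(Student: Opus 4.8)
The plan is to argue by contradiction, using a rescaling (Zalcman-type) argument to manufacture a non-constant $K$-quasiregular map of all of $\R^n$ that omits the $q$ prescribed values, which is then forbidden by Rickman's theorem (Theorem \ref{rickman}). The first ingredient is a Marty-type criterion: normality of a family of $K$-quasiregular maps on $G$ is equivalent to the local uniform boundedness of the spherical derivatives
\[ f^\#(x) = \limsup_{r\to 0} \frac{1}{r} \max_{|y-x|=r} \chi(f(y),f(x)), \]
where $\chi$ is the chordal metric and $L(x,r)$ is as in \eqref{eq:Ll}. The implication that locally bounded derivatives force normality follows from an Arzel\`a--Ascoli equicontinuity argument in the chordal metric, while the reverse implication is essentially immediate.

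Assuming $\mathcal{F}$ is not normal, there is a point $x_0\in G$ and a sequence $f_j\in\mathcal{F}$ for which $f_j^\#$ is unbounded near $x_0$. The heart of the matter is the rescaling lemma. Following Zalcman's selection argument adapted to this setting, I would choose points $x_j\to x_0$, scales $\rho_j\to 0^+$, and pass to the rescaled maps
\[ g_j(y) = f_j(x_j+\rho_j y), \]
arranged so that $g_j^\#(0)$ is bounded below by a fixed positive constant while $g_j^\#$ stays bounded on compact subsets of $\R^n$. Since precomposition with the conformal similarity $y\mapsto x_j+\rho_j y$ preserves the maximal dilatation, each $g_j$ is $K$-quasiregular, and because $\rho_j\to 0$ with $x_0$ interior to $G$, the domains of the $g_j$ exhaust $\R^n$. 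By the Marty-type criterion, $\{g_j\}$ is normal, so after passing to a subsequence $g_j\to g$ locally uniformly in the chordal metric.

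It then remains to identify the limit. First, $K$-quasiregularity is preserved under local uniform limits (the convergence can be upgraded to convergence in $W^1_{n,loc}$, and the distortion inequality passes to the limit by lower semicontinuity), so $g:\R^n\to S^n$ is $K$-quasiregular. Second, the normalization $g^\#(0)>0$ forces $g$ to be non-constant. Third, since each $g_j$ omits $a_1,\ldots,a_q$, a Hurwitz-type argument — using that a non-constant quasiregular map is open and discrete, so a locally uniform limit either attains a value on an open set or omits it entirely — shows that $g$ also omits these $q$ points. Thus $g:\R^n\to S^n\setminus\{a_1,\ldots,a_q\}$ is a non-constant $K$-quasiregular map omitting $q=q(n,K)$ values, contradicting Theorem \ref{rickman}; hence $\mathcal{F}$ is normal.

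The main obstacle I anticipate is the rescaling lemma itself: the Zalcman selection must be executed so that the limit is genuinely non-degenerate (positive spherical derivative at the origin) and defined on all of $\R^n$, and one must take care that the quantity used to measure derivatives is the correct substitute for the spherical derivative, since in dimension $n\geq 3$ there is no single complex derivative to normalize and no conformal invariance to exploit. Verifying that $K$-quasiregularity survives the limit and that the omitted-value property is inherited are the remaining technical points, both resting on the openness and discreteness of quasiregular maps together with the compactness of $K$-quasiregular families.
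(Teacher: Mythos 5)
The paper offers no proof of this theorem: it is imported verbatim from Miniowitz \cite{Miniowitz}, so the only meaningful comparison is with the proof in that source. Your overall strategy --- rescale a non-normal sequence to produce a non-constant $K$-quasiregular map of all of $\R^n$ omitting $q(n,K)$ values, then invoke Theorem \ref{rickman} --- is exactly Miniowitz's strategy, and your back end is sound: local uniform limits of $K$-quasiregular maps are constant or $K$-quasiregular by \cite[Theorem VI.8.6]{Rickman}, the normalization forces non-constancy, and the degree-theoretic (Hurwitz-type) argument correctly shows that a non-constant limit inherits the omitted values.

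The genuine gap is in the quantity on which your whole front end rests. For quasiregular maps the exponent-$1$ spherical derivative $f^{\#}(x)=\limsup_{r\to 0}r^{-1}\max_{|y-x|=r}\chi(f(y),f(x))$ is the wrong substitute for Marty's quantity: a $K$-quasiregular map need not be locally Lipschitz anywhere near as strongly as a holomorphic map. The radial stretch $f(x)=|x|^{\alpha-1}x$ with $0<\alpha<1$ is $K$-quasiconformal (with $K=\alpha^{1-n}$) and has $f^{\#}(0)=+\infty$, since $\max_{|y|=r}|f(y)-f(0)|=r^{\alpha}$. Hence the one-element (trivially normal) family $\{f\}$ has unbounded $f^{\#}$, so the implication ``normal $\Rightarrow$ locally bounded $f^{\#}$'' --- the one you call essentially immediate --- is simply false; and note this already fails for $K>1$ in dimension $n=2$, not only for $n\geq 3$. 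More importantly, the Zalcman selection cannot be ``arranged'' as you describe: the standard selection chooses $x_j$ to nearly maximize a quantity of the form $(r-|x-x_0|)f_j^{\#}(x)$ and sets $\rho_j=1/f_j^{\#}(x_j)$, which is meaningless at points where $f_j^{\#}=+\infty$, and nothing prevents the maps of the family from having such points near $x_0$. This is precisely the obstacle you flagged but did not resolve, and as written the selection step fails. The repair --- and what Miniowitz actually does --- is to replace the difference quotient by the H\"older-normalized derivative $f^{\#}_{\alpha}(x)=\limsup_{y\to x}\chi(f(y),f(x))/|y-x|^{\alpha}$ with $\alpha=K^{1/(1-n)}$ (any exponent at most this, e.g.\ the $1/K$ appearing in Lemma \ref{lem:lip}, also works): by the local H\"older estimate \cite[Theorem III.4.7]{Rickman} this quantity is finite and locally bounded for every $K$-quasiregular map, the implication ``locally uniformly bounded $\Rightarrow$ equicontinuous $\Rightarrow$ normal'' survives, and it scales correctly under $y\mapsto x_j+\rho_j y$ (picking up a factor $\rho_j^{\alpha}$), so the Zalcman maximization and the rest of your argument then go through.
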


We recall that $\mathcal{F}$ is a normal family of $K$-quasiregular mappings if it is pre-compact in the space $C(G,S^n)$ of continuous functions from a domain $G \subset S^n$ into $S^n$ in the topology of local uniform convergence. Moreover, then every limit function is either a constant or a $K$-quasiregular mapping by \cite[Theorem VI.8.6]{Rickman}.

\subsection{Quasiregular semigroups}

We first recall the definition of a quasiregular semigroup from \cite{IM}.

\begin{definition}
A $K$-quasiregular semigroup $G$ on $S^n$, for $n\geq 2$, is a family of mappings $g:S^n \to S^n$ so that each $g\in G$ is $K$-quasiregular, non-injective and the family is closed under composition. A quasiregular semigroup is a $K$-quasiregular semigroup for some $K\geq 1$. If $G$ is generated by $\{ g_i : i\in I \}$, then we write $G = \left < g_i : i\in I \right >$.
\end{definition}

Applying Theorem \ref{mini}, we see that $G$ partitions $S^n$ into two sets: one of stable behaviour and one of chaotic behaviour.

\begin{definition}
Let $G$ be a quasiregular semigroup on $S^n$, for $n\geq 2$. Then:
\begin{enumerate}[(i)]
\item $x\in S^n$ is said to be in the Fatou set of $G$, denoted by $F(G)$, if there exists a neighbourhood $U$ of $x$ so that $G$ restricted to $U$ is a normal family;
\item $x\in S^n$ is said to be in the Julia set of $G$, denoted by $J(G)$, is there is no neighbourhood $U$ of $x$ on which $G$ restricted to $U$ is a normal family.
\end{enumerate}
\end{definition}

It is clear from the definition that $F(G)$ is open and $J(G)$ is closed.
We remark that in the theory of rational semigroups, the Fatou set is often denoted by $N(G)$. In this paper, we will keep in line with current notation in complex dynamics and use $F(G)$ to denote this set.

\begin{example}
If $f:S^n \to S^n$ is a uqr mapping, then $G = \{ f, f^2, f^3 ,\ldots \}$ is a quasiregular semigroup generated by the single map $f$. We then write $G = \left < f \right >$, $J(\left <f \right >) = J(f)$ and $F(\left <f \right >) = F(f)$.
\end{example}

\subsection{Modulus, capacity and uniformly perfect sets}

We refer to \cite{Vuorinen} for more details on the notions introduced in this subsection.

Write $\chi$ and $d$ for the chordal and Euclidean distances on $S^n$
and $\R^{n}$ respectively.
The chordal distance is normalized so that $\chi(x,y)\leq 1$ for all $x,y \in S^n$ with equality if and only if $x$ and $y$ are antipodal points. Open balls in the chordal metric will be denoted by $B_{\chi}(x,r)$ for $x\in S^n$ and $r>0$. Open balls in $\R^n$ in the Euclidean metric will be denoted by $B(x,r)$. We also write $\mathbb{B}^n=B(0,1)$.

For sets $E$ and $F$ in $S^n$, we write $\chi(E)$ for the chordal diameter of $E$ and $\chi(E,F)$ for the chordal distance between $E$ and $F$.
If $E$ consists of one point $x \in S^n$, we write $\chi(x,F)$.
Similarly, for sets $E$ and $F$ in $\R^{n}$, we write $d(E)$ for the Euclidean diameter of $E$ and $d(E,F)$ for the Euclidean distance between $E$ and $F$. 
Denote by $A(x,r,s)$ the Euclidean annulus $\{y\in \R^{n} : r<d(y,x) <s\}$. The chordal annulus
$A_{\chi}(x,r,s)$ is defined analogously.

A domain $R\subset S^n$ is called a ring domain if $S^n \setminus R$ has exactly two components. If the two components are $C_{0}$ and $C_{1}$, then we write $R = R(C_{0},C_{1})$.

Given two sets $E$ and $F$, we write $\Delta (E,F ; V)$ for the family of paths with one end-point in $E$, the other end-point in $F$, and which are contained in $V$. When $V= S^n$, we write $\Delta(E,F) = \Delta (E,F;S^n)$.

The $n$-modulus $M(\Gamma)$ of a path family $\Gamma$ is defined by
\begin{equation*}
M(\Gamma) = \inf \int _{\R^{n}} \rho ^{n} \: dm,
\end{equation*}
where $m$ denotes $n$-dimensional Lebesgue measure, and the infimum is taken
over all non-negative Borel measurable functions $\rho$ such that
\begin{equation*}
\int _{\gamma} \rho \: ds \geq 1
\end{equation*}
for each locally rectifiable curve $\gamma \in \Gamma$.

The conformal modulus of a ring domain $R(C_{0},C_{1})$ is defined by
\begin{equation}
\label{moddef}
\mod R(C_{0},C_{1}) = \left ( \frac{ M(\Delta(C_{0},C_{1}))}{\omega _{n-1}} \right ) ^{1/(1-n)},
\end{equation}
where $\omega_{n-1}$ is the $(n-1)$-dimensional area of the unit $(n-1)$-sphere. 
If $R_{0},R_{1}$ are two ring domains with $R_{0} \subset R_{1}$, then
\begin{equation}
\label{modeq1}
\mod R_{0} \leq \mod R_{1}.
\end{equation}
The capacity of a ring domain $R(C_{0},C_{1})$ is defined to be 
\begin{equation}
\label{capdef}
\capac R = M(\Delta(C_{0},C_{1})).
\end{equation}

We recall that a closed set $X \subset S^n$ is perfect if and only if it contains no isolated points. Uniformly perfect sets are perfect in a quantitative way. More precisely, we have the following:

\begin{definition}
A closed set $X \subset S^n$ containing at least $2$ points is called $\alpha$-uniformly perfect if there is no ring domain $R \subset S^n$ separating $X$ such that $\mod R > \alpha$. Further,
$X$ is called uniformly perfect if it is $\alpha$-uniformly perfect for some $\alpha >0$.
\end{definition}

We can pass from general ring domains to round ring domains as follows, see for example \cite[Lemma 2.6 and Corollary 2.7]{FN}.

\begin{lemma}
\label{lem:chordal}
A closed set $X \subset S^n$ is uniformly perfect if and only if the moduli of the chordal annuli separating $X$ are bounded from above. Moreover, if $X$ is not uniformly perfect, then there is a sequence of chordal annuli $A_m$ with centres at $x_m \in X$ and $\mod(A_m) \to \infty$.
\end{lemma}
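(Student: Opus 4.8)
The plan is to treat the two implications separately; the forward one is immediate, while the reverse carries all the content and will simultaneously yield the final assertion. For the forward direction, suppose $X$ is $\alpha$-uniformly perfect. Any chordal annulus $A_\chi(x,r,s)$ separating $X$ is in particular a ring domain separating $X$, so by the definition of $\alpha$-uniform perfectness $\mod A_\chi(x,r,s)\le\alpha$; hence the moduli of the chordal annuli separating $X$ are bounded above by $\alpha$, and there is nothing further to prove here.

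For the reverse direction I would argue the contrapositive, obtaining the moreover along the way. Assume $X$ is not uniformly perfect, so for each $m$ there is a ring domain $R_m=R(C_0^m,C_1^m)$ separating $X$ with $\mod R_m>m$, where $X$ meets each of the complementary continua $C_0^m,C_1^m$. The plan is to replace $R_m$ by a chordal annulus of comparably large modulus centred at a point of $X$. The device I would use is the standard upper bound for the modulus of a ring domain in terms of the \emph{relative distance} of its complementary continua,
\[
q(C_0,C_1)=\frac{\chi(C_0,C_1)}{\min\{\chi(C_0),\chi(C_1)\}},
\]
namely $\mod R(C_0,C_1)\le\Phi_n\big(q(C_0,C_1)\big)$ for an increasing function $\Phi_n$ that is finite at each finite argument (this follows from spherical symmetrization and comparison with the extremal Teichm\"uller ring, see \cite{Vuorinen}). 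Since $\mod R_m>m\to\infty$, this forces $q(C_0^m,C_1^m)\to\infty$.

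To finish, relabel so that $\chi(C_0^m)\le\chi(C_1^m)$, write $D_m=\chi(C_0^m)$ and $\delta_m=\chi(C_0^m,C_1^m)$, and pick $x_m\in C_0^m\cap X$, which exists because $R_m$ separates $X$. The chordal triangle inequality gives $C_0^m\subseteq\overline{B_\chi(x_m,D_m)}$ and $C_1^m\subseteq S^n\setminus B_\chi(x_m,\delta_m)$, so the two complementary components of $A_m:=A_\chi(x_m,D_m,\delta_m)$ contain $C_0^m$ and $C_1^m$ respectively; each therefore meets $X$, so $A_m$ separates $X$ and is centred at $x_m\in X$. Because $q(C_0^m,C_1^m)=\delta_m/D_m\to\infty$ we have $D_m<\delta_m$ for large $m$, so $A_m$ is a genuine annulus, and since the modulus of a chordal annulus is an increasing function of the ratio of its radii that tends to infinity with that ratio, $\mod A_m\to\infty$ (if some $D_m=0$, take a small positive inner radius instead). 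This produces exactly the sequence required in the moreover and shows the chordal moduli separating $X$ are unbounded, completing the contrapositive. The only nontrivial input, and the main obstacle, is the modulus bound in terms of relative distance; the remaining steps—inserting the round annulus and re-centring at a point of $X$—are elementary consequences of the triangle inequality and the rotational symmetry of the chordal metric.
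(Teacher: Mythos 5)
Your proof is correct; note that the paper does not prove this lemma itself but quotes it from \cite[Lemma 2.6 and Corollary 2.7]{FN}, and your argument follows the same standard circle of ideas used there: a symmetrization/Teichm\"uller-type modulus bound showing that a separating ring of large modulus forces large relative distance between its complementary continua, followed by inserting a round chordal annulus and recentring it at a point of $X$. The only points to polish are cosmetic: the modulus of a chordal annulus $A_{\chi}(x,r,s)$ is not a function of the ratio $s/r$ alone (though it is bounded below by $\log(s/r)$, which is all you actually use), and you should remark explicitly that $A_m\subset R_m$, whence $A_m\cap X=\emptyset$ and $A_m$ genuinely separates $X$ --- this is immediate from your observation that the two complementary components of $A_m$ contain $C_0^m$ and $C_1^m$.
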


\section{Basic properties of quasiregular semigroups}

In this section, we note some properties of quasiregular semigroups that will be in direct analogy of those for rational semigroups as outlined in \cite{HM}. Throughout this section, $G$ is a quasiregular semigroup.

Our first result is clear from the normal family definition.

\begin{proposition}
\label{prop:1}
If $g\in G$, then $F(G) \subseteq F(g)$ and $J(g) \subseteq J(G)$.
\end{proposition}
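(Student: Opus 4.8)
The plan is to reduce the statement to two elementary facts: that the cyclic semigroup generated by $g$ is contained in $G$, and that normality is inherited by subfamilies. Once these are in place both inclusions fall out immediately, with the Julia inclusion obtained from the Fatou one by complementation.

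First I would record that $\langle g \rangle = \{ g^m : m \geq 1 \} \subseteq G$. This is immediate from the hypotheses: $g \in G$, and since $G$ is closed under composition, an easy induction gives $g^m = g \circ g^{m-1} \in G$ for every $m \geq 1$. Thus every iterate of $g$ is a member of $G$, even though $G$ may contain many further maps.

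Next I would prove the Fatou inclusion $F(G) \subseteq F(g)$. Let $x \in F(G)$, so by definition there is a neighbourhood $U$ of $x$ on which $G$ is a normal family; that is, the restricted family $G|_U$ is precompact in $C(U,S^n)$. Since $\langle g \rangle|_U \subseteq G|_U$, its closure in $C(U,S^n)$ is a closed subset of the compact set $\overline{G|_U}$ and is therefore itself compact, so $\langle g \rangle|_U$ is precompact, i.e. normal on $U$. Hence $x \in F(g)$, which gives $F(G) \subseteq F(g)$. Finally, since $F(G)$ and $F(g)$ are by definition the complements in $S^n$ of $J(G)$ and $J(g)$ respectively, taking complements in $F(G) \subseteq F(g)$ yields $J(g) \subseteq J(G)$.

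There is no genuine obstacle here: the proposition is essentially a restatement of the monotonicity of normality under passage to subfamilies. The only point that deserves a moment's care is the claim that a subfamily of a normal family is again normal, which is exactly the observation that a subset of a precompact set is precompact; this uses the characterization of normality as precompactness in $C(U,S^n)$ recalled just after Theorem \ref{mini}.
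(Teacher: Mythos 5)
Your proof is correct and follows the same route the paper intends: the paper gives no explicit argument, remarking only that the result is ``clear from the normal family definition,'' and your write-up---iterates of $g$ lie in $G$ by closure under composition, a subfamily of a normal (precompact) family is normal, and the Julia inclusion follows by taking complements---is exactly the spelled-out version of that observation.
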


A set $X$ is called {\it forward invariant} under $G$ if $g(X) \subseteq X$ for all $g\in G$, {\it backward invariant} if $g^{-1}(X) \subseteq X$ for all $g\in G$ and {\it completely invariant} if $X$ is both forward and backward invariant. It is well-known that the Julia set and Fatou set of a uqr map are completely invariant and we will see below that the same need not be true for quasiregular semigroups. We do however have the following two results.

\begin{proposition}
\label{prop:2}
The Fatou set $F(G)$ is forward invariant and the Julia set $J(G)$ is backward invariant.
\end{proposition}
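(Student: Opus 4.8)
The two assertions are in fact equivalent, so I would prove only one of them. Since $F(G)$ and $J(G)$ partition $S^n$, for any $g\in G$ we have $g^{-1}(J(G))=S^n\setminus g^{-1}(F(G))$, and hence the inclusion $g^{-1}(J(G))\subseteq J(G)$ holds if and only if $F(G)\subseteq g^{-1}(F(G))$, that is, if and only if $g(F(G))\subseteq F(G)$. Thus it suffices to establish the forward invariance of the Fatou set, and the backward invariance of $J(G)$ follows immediately by complementation.

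To prove $g(F(G))\subseteq F(G)$, fix $x_0\in F(G)$ and $g\in G$, and set $y_0=g(x_0)$. Recall that a non-constant quasiregular map is open and discrete \cite{Rickman}, so, choosing an open ball $U$ about $x_0$ with $\overline U\subseteq F(G)$, the image $V=g(U)$ is an open neighbourhood of $y_0$. I would show that $G$ is normal on $V$. The driving mechanism is the semigroup property: given any sequence $\{h_k\}\subseteq G$, each composition $h_k\circ g$ again lies in $G$. Since every point of the compact set $\overline U\subseteq F(G)$ has a neighbourhood on which $G$ is normal, a standard covering and diagonal argument produces a subsequence, which I continue to denote by $\{h_k\circ g\}$, converging uniformly on $\overline U$ to some limit $\phi$.

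The crux is to transfer this convergence through the (generally non-injective) map $g$ so as to obtain convergence of $\{h_k\}$ itself on $V$. If $x,x'\in U$ satisfy $g(x)=g(x')=y$, then $(h_k\circ g)(x)=h_k(y)=(h_k\circ g)(x')$ for every $k$, so $\phi(x)=\phi(x')$; hence $\phi$ is constant on the $g$-fibres and descends to a well-defined map $\psi$ on $V$ with $\psi(g(x))=\phi(x)$. For local uniform convergence, let $C\subseteq V$ be compact and set $K=\overline U\cap g^{-1}(C)$, which is compact and satisfies $g(K)=C$. For each $y\in C$ choose $x\in K$ with $g(x)=y$; then $\chi(h_k(y),\psi(y))=\chi\big((h_k\circ g)(x),\phi(x)\big)\leq\sup_{x'\in K}\chi\big((h_k\circ g)(x'),\phi(x')\big)$, and the right-hand side tends to $0$ because $h_k\circ g\to\phi$ uniformly on $\overline U\supseteq K$. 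Thus $h_k\to\psi$ uniformly on $C$, so $\{h_k\}$ converges locally uniformly on $V$. As every sequence in $G$ admits such a subsequence, $G$ is normal on $V$ and therefore $y_0\in F(G)$. The main obstacle is precisely this final transfer step: because $g$ need not be injective one cannot naively pull back normality, and it is the fibre-constancy of $\phi$ together with the compactness of $K$ that makes the argument go through.
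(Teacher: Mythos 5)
Your proof is correct, and it is essentially the argument the paper relies on: the paper in fact omits the proof of this proposition, stating that it is identical to the rational-semigroup case (Sumi, Lemma 1.1.4), and that standard proof is precisely your combination of the semigroup property ($h_k\circ g\in G$), the openness of non-constant quasiregular maps, and a transfer of normality through $g$. Two small remarks. First, a minor imprecision in the transfer step: you invoke $\psi(y)=\phi(x)$ for $x\in K\subseteq\overline{U}$, but you established fibre-constancy of $\phi$ only for fibres in $U$; since $h_k\circ g\to\phi$ uniformly on all of $\overline{U}$, the identical argument gives constancy on fibres in $\overline{U}$, so this is a one-line fix rather than a gap. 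Second, the transfer can be streamlined so as to avoid the quotient map $\psi$ and the compact set $K$ entirely: uniform convergence of $h_k\circ g$ on $\overline{U}$ implies that this sequence is uniformly Cauchy on $U$, and since
\[
\sup_{y\in g(U)}\chi\bigl(h_k(y),h_j(y)\bigr)=\sup_{x\in U}\chi\bigl(h_k(g(x)),h_j(g(x))\bigr),
\]
the sequence $\{h_k\}$ is uniformly Cauchy on $V=g(U)$; completeness of $S^n$ in the chordal metric then gives uniform convergence of $h_k$ on $V$, which is all that normality requires. Your opening reduction (forward invariance of $F(G)$ is equivalent to backward invariance of $J(G)$ by complementation, using that each $g\in G$ is defined on all of $S^n$) is also correct and matches the standard treatment.
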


\begin{proposition}
\label{prop:3}
If $G = \left < g_1,\ldots, g_m \right >$, then
\[ F(G) = \bigcap _{i=1}^m g_i^{-1}( F(G) ) ,\quad J(G) = \bigcup_{i=1}^m g_i^{-1} ( J(G)).\]
\end{proposition}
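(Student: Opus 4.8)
The two displayed identities are equivalent, so I would prove only the first. Since $F(G)$ and $J(G)$ partition $S^n$ and preimages commute with complementation, for each $i$ we have $g_i^{-1}(J(G)) = g_i^{-1}(S^n \setminus F(G)) = S^n \setminus g_i^{-1}(F(G))$; taking the union over $i$ and complementing converts the statement $J(G) = \bigcup_{i=1}^m g_i^{-1}(J(G))$ into $F(G) = \bigcap_{i=1}^m g_i^{-1}(F(G))$, and conversely. I would therefore establish $F(G) = \bigcap_{i=1}^m g_i^{-1}(F(G))$ by proving both inclusions. The inclusion $F(G) \subseteq \bigcap_{i=1}^m g_i^{-1}(F(G))$ is immediate from forward invariance: by Proposition \ref{prop:2}, $g_i(F(G)) \subseteq F(G)$, hence $F(G) \subseteq g_i^{-1}(F(G))$ for every $i$, and intersecting over $i$ gives the claim.

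For the reverse inclusion, suppose $x \in \bigcap_{i=1}^m g_i^{-1}(F(G))$, that is, $g_i(x) \in F(G)$ for all $i$. For each $i$, I would choose a neighbourhood $V_i$ of $g_i(x)$ on which $G|_{V_i}$ is normal, and then, by continuity of $g_i$, a neighbourhood $U_i$ of $x$ with $g_i(U_i) \subseteq V_i$; set $U = \bigcap_{i=1}^m U_i$. The goal is to show $G|_U$ is a normal family, which places $x \in F(G)$. The structural observation driving the argument is that every element of $G$ is a nonempty word in the generators, classified by its rightmost (first-applied) letter, so
\[ G = \{ g_1, \ldots, g_m \} \cup \bigcup_{i=1}^m \{ h \circ g_i : h \in G \}. \]
Because a finite union of normal families is normal (given a sequence, one of the finitely many families contains infinitely many of its terms, and normality of that family supplies a locally uniformly convergent subsequence), it suffices to verify normality of each piece on $U$.

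The finite family $\{ g_1|_U, \ldots, g_m|_U \}$ is trivially normal. For a family of the form $\{ (h \circ g_i)|_U : h \in G \}$ with $i$ fixed, I would take an arbitrary sequence $(h_k \circ g_i)$: since $g_i(U) \subseteq V_i$ and $G|_{V_i}$ is normal, the sequence $(h_k|_{V_i})$ admits a subsequence converging locally uniformly on $V_i$ to a continuous limit $\phi$; precomposing with the fixed continuous map $g_i$, which carries each compact subset of $U$ into a compact subset of $V_i$, shows $h_k \circ g_i \to \phi \circ g_i$ locally uniformly on $U$ along that subsequence. Hence every piece is normal on $U$, so $G|_U$ is normal and $x \in F(G)$. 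The routine parts are the complementation reduction and the forward inclusion; the step needing genuine care is the reverse inclusion, specifically organizing $G$ by the rightmost generator of each word and checking that local uniform convergence survives composition with the fixed $g_i$, which relies precisely on the inclusion $g_i(U) \subseteq V_i$ together with continuity of $g_i$.
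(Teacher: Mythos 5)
Your proof is correct. The paper gives no proof of its own here, deferring to Sumi's Lemma 1.1.4 for rational semigroups, and your argument --- the complementation reduction, forward invariance for the easy inclusion, and for the reverse inclusion the decomposition of $G$ according to the first-applied generator together with the observation that local uniform convergence survives precomposition with the fixed continuous map $g_i$ --- is exactly that standard argument, transferred without change to the quasiregular setting.
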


The proof of these results are identical to those for rational semigroups given in \cite[Lemma 1.1.4]{Sumi} and are omitted.
A point $x\in S^n$ is called {\it exceptional} if the backward orbit
\[ O^-(x) = \{ y : g(y)=x \text{ for some } g\in G \}\]
is finite. The set of exceptional points is denoted by $\mathcal{E}(G)$. If $G$ is finitely generated, then $\mathcal{E}(G) \subset F(G)$.

\begin{proposition}
\label{prop:5}
If $G$ is a $K$-quasiregular semigroup in $S^n$, then
the number of elements of $\mathcal{E}(G)$ is at most $q-1$, where $q=q(n,K)$ is Rickman's constant from Theorem \ref{rickman}.
\end{proposition}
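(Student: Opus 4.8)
The plan is to argue by contradiction, using Miniowitz's normality criterion (Theorem \ref{mini}) to show that having $q$ exceptional points would force the Julia set to be finite, contradicting its known infinitude. So suppose $\mathcal{E}(G)$ contained at least $q$ points, and fix distinct points $a_1,\ldots,a_q \in \mathcal{E}(G)$. Writing $A = \{a_1,\ldots,a_q\}$, I would form the set
\[ F = A \cup \bigcup_{i=1}^q O^-(a_i). \]
Since each $a_i$ is exceptional, each backward orbit $O^-(a_i)$ is finite, and hence $F$ is a finite set.

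First I would check that $F$ is backward invariant, i.e.\ $g^{-1}(F) \subseteq F$ for every $g\in G$. This is the only bookkeeping step: if $g(z)\in F$ then either $g(z)=a_i$, so $z\in O^-(a_i)\subseteq F$; or $g(z)\in O^-(a_i)$, in which case $h(g(z))=a_i$ for some $h\in G$ and then $(h\circ g)(z)=a_i$ with $h\circ g\in G$ again gives $z\in O^-(a_i)\subseteq F$. Equivalently, the open connected set $W = S^n\setminus F$ is forward invariant, $g(W)\subseteq W$ for all $g\in G$, and in particular every image $g(W)$ avoids the $q$ distinct points of $A$.

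Next I would apply Theorem \ref{mini} to the restricted family $\{g|_W : g\in G\}$. These are all $K$-quasiregular maps on the domain $W$ whose images omit the $q=q(n,K)$ points of $A$, so Miniowitz's theorem shows the family is normal on $W$. Hence $W\subseteq F(G)$, and therefore $J(G)\subseteq F$ is a finite set.

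The hard part will be closing the contradiction, since this is where one must invoke the background theory of uqr iteration rather than the semigroup manipulations above. Taking any $g\in G$, which by definition is non-injective and uniformly quasiregular, its Julia set $J(g)$ is an infinite (indeed perfect) set; by Proposition \ref{prop:1} we have $J(g)\subseteq J(G)$, so $J(G)$ is infinite. This contradicts the inclusion $J(G)\subseteq F$ into a finite set, and hence $\mathcal{E}(G)$ cannot contain $q$ points, giving $|\mathcal{E}(G)|\leq q-1$. I expect the sole genuine obstacle to be justifying that $J(G)$ is infinite, as every other step is a direct combination of the definition of exceptional points with the normality criterion of Theorem \ref{mini}.
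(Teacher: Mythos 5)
Your proposal is correct and follows essentially the same route as the paper: assume $q$ exceptional points, use forward invariance of the complement of a finite backward-invariant set together with Theorem \ref{mini} to conclude $J(G)$ is finite, and contradict the infinitude of $J(g) \subseteq J(G)$ for a non-injective uqr map $g$ (the fact the paper cites as \cite[Lemma 5.3]{FNsphere}). Your explicit construction of the finite backward-invariant set $F = A \cup \bigcup_i O^-(a_i)$ is just a more carefully justified version of the paper's step of applying normality on $S^n \setminus \mathcal{E}(G)$.
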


\begin{proof}
If $\mathcal{E}(G)$ contains at least $q$ points, then the family $G$ applied to $S^n \setminus \mathcal{E}(G)$ must be normal. This means $J(G)$ contains finitely many elements. However, since each $g\in G$ is not injective, by \cite[Lemma 5.3]{FNsphere}, $J(g)$ contains infinitely many elements. This contradicts Proposition \ref{prop:1}.
\end{proof}

\begin{proposition}
\label{prop:4}
If $x \in S^n \setminus \mathcal{E}(G)$, then $J(G) \subseteq \overline{O^-(x)}$.
\end{proposition}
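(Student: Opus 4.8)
The plan is to argue by contradiction via a ``blowing-up'' property of the Julia set, exactly as one does for a single rational map, with Miniowitz's theorem (Theorem \ref{mini}) playing the role of Montel's theorem. Fix $y\in J(G)$; it suffices to show that every neighbourhood $U$ of $y$, which I may take to be a chordal ball (hence a genuine domain), meets $O^-(x)$, for then $y\in\overline{O^-(x)}$. Accordingly, I would consider the saturated image set and its complement
\[ W = \bigcup_{g\in G} g(U), \qquad E = S^n\setminus W, \]
and prove that $E\subseteq\mathcal{E}(G)$. Granting this, since $x\notin\mathcal{E}(G)$ we get $x\in W$, so $x=g(w)$ for some $g\in G$ and $w\in U$; then $w\in U\cap O^-(x)$, which is exactly what is needed.

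First I would bound the size of $E$ using Theorem \ref{mini}. If $E$ contained $q=q(n,K)$ distinct points $a_1,\ldots,a_q$, then $g(U)\cap\{a_1,\ldots,a_q\}=\emptyset$ for every $g\in G$ by the very definition of $E$, so Theorem \ref{mini} would force the family $\{g|_U:g\in G\}$ to be normal, whence $U\subseteq F(G)$ by the definition of the Fatou set. This contradicts $y\in J(G)\cap U$. Therefore $E$ has at most $q-1$ points; in particular $E$ is finite.

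The key step is to show that $E$ is backward invariant under $G$. Suppose $a\in E$ and $h(b)=a$ for some $h\in G$. If we had $b\in W$, say $b=g(w)$ with $g\in G$ and $w\in U$, then, using that $G$ is closed under composition, $a=h(g(w))=(h\circ g)(w)\in W$, contradicting $a\in E$. Hence $b\in E$, which shows $h^{-1}(a)\subseteq E$ for every $h\in G$. Consequently $O^-(a)\subseteq E$ for each $a\in E$, and since $E$ is finite, $O^-(a)$ is finite, so $a\in\mathcal{E}(G)$. This gives $E\subseteq\mathcal{E}(G)$, completing the argument.

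I expect the only real content to lie in recognizing that the omitted set $E$ is simultaneously finite (via Miniowitz) and backward invariant (via closure of $G$ under composition), which together pin it inside $\mathcal{E}(G)$; the remainder is bookkeeping with neighbourhoods. The two points to handle carefully are taking $U$ connected so that Theorem \ref{mini} applies on a domain, and remembering that normality of $G|_U$ means precisely $U\subseteq F(G)$.
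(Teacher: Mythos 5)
Your proof is correct and takes essentially the same route as the paper: both arguments combine Theorem \ref{mini} with the closure of $G$ under composition to show that any point omitted by every $g\in G$ on a neighbourhood of a Julia point must be exceptional, then invoke $x\notin\mathcal{E}(G)$. The only difference is organizational --- the paper punctures the neighbourhood at $y$ and applies the argument directly to $O^-(x)$, whereas you work with the full neighbourhood and the omitted set $E=S^n\setminus\bigcup_{g\in G}g(U)$, which is, if anything, a slightly cleaner way to run the same argument.
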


\begin{proof}
Let $y\in J(G)$ and $U$ any neighbourhood of $y$. Suppose there is no $w \in U' = U \setminus \{ y \}$ for which $g(w) = x$ for some $g\in G$. Then for any $u \in O^-(x)$, there is no $g\in G$ with $g(w) = u$. Since $G$ is not normal in $U$ because $y\in J(f)$, $O^-(x)$ must contain at most $q-1$ points. Then $x$ is exceptional, which was assumed to not be the case.
\end{proof}

\begin{proposition}
\label{prop:6}
The Julia set $J(G)$ is the smallest closed backward invariant set containing at least $q$ points.
\end{proposition}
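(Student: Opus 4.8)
The plan is to verify the two defining features of a ``smallest'' object: that $J(G)$ is itself a closed backward invariant set containing at least $q$ points, and that it sits inside every other set with those three properties.

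For the first part, $J(G)$ is closed by definition and backward invariant by Proposition \ref{prop:2}. To see that it contains at least $q$ points I would argue, exactly as in the proof of Proposition \ref{prop:5}, that it is in fact infinite: pick any $g\in G$; since $g$ is non-injective, $J(g)$ is infinite by \cite[Lemma 5.3]{FNsphere}, and $J(g)\subseteq J(G)$ by Proposition \ref{prop:1}. Hence $J(G)$ is one of the admissible sets.

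For minimality, let $E$ be any closed backward invariant set with at least $q$ points, and fix $q$ distinct points $a_1,\ldots,a_q\in E$. The key observation is that backward invariance of $E$ forces its complement to be \emph{forward} invariant: if some $x\notin E$ had $g(x)\in E$ for a $g\in G$, then $x\in g^{-1}(E)\subseteq E$, a contradiction. Now take any $x\in S^n\setminus E$. Since $E$ is closed, we may choose a neighbourhood $U\subseteq S^n\setminus E$ of $x$, and forward invariance gives $g(U)\subseteq S^n\setminus E$ for every $g\in G$, so $g(U)\cap\{a_1,\ldots,a_q\}=\emptyset$ for all $g\in G$. By Miniowitz's theorem (Theorem \ref{mini}), $G$ is a normal family on $U$, whence $x\in F(G)$. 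This shows $S^n\setminus E\subseteq F(G)$, i.e. $J(G)\subseteq E$, as required.

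The whole argument hinges on the single step in the last paragraph: recognizing that the complement of a backward invariant set omitting $q$ points is a forward invariant domain on which the Montel-type Theorem \ref{mini} directly applies. I expect this normality step to be the only substantive point; the invariance bookkeeping and the count of points in $J(G)$ are routine consequences of the earlier propositions.
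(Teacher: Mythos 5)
Your proposal is correct and takes essentially the same route as the paper: the paper's proof is precisely the observation that the complement of a closed backward invariant set with at least $q$ points is a forward invariant open set omitting $q$ points, so Theorem \ref{mini} places that complement inside $F(G)$. You have merely written out this one-line argument in full detail and added the (implicit in the paper, but worth stating) verification that $J(G)$ itself is closed, backward invariant by Proposition \ref{prop:2}, and contains at least $q$ points via Proposition \ref{prop:1}.
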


\begin{proof}
The complement of a backward invariant closed set containing at least $q$ points is a forward invariant open set omitting at least $q$ points. Hence by Theorem \ref{mini} the complement must be contained in the Fatou set.
\end{proof}

\begin{proposition}
\label{prop:7}
The Julia set $J(G)$ is a perfect set.
\end{proposition}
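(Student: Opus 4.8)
The plan is to use that $J(G)$ is already known to be closed, and to prove that it has no isolated points. I would first record two facts. Since $J(g)\subseteq J(G)$ for every $g\in G$ by Proposition~\ref{prop:1}, and each $J(g)$ is infinite by \cite[Lemma 5.3]{FNsphere}, the set $J(G)$ is infinite; on the other hand $\mathcal{E}(G)$ is finite by Proposition~\ref{prop:5}. Writing $P$ for the set of accumulation points of $J(G)$, the statement to prove is exactly $P=J(G)$. Since $J(G)$ is closed we always have $P\subseteq J(G)$, so the content is the reverse inclusion.

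Next I would establish three properties of $P$. First, $P\neq\emptyset$: fixing a non-exceptional point $w\in J(G)$ (possible since $J(G)$ is infinite and $\mathcal{E}(G)$ finite), Proposition~\ref{prop:4} together with the backward invariance of $J(G)$ (Proposition~\ref{prop:2}) gives $J(G)=\overline{O^-(w)}$ with $O^-(w)\subseteq J(G)$ infinite, and an infinite subset of the compact space $S^n$ must have an accumulation point, which lies in the closed set $J(G)$. Second, $P$ is closed. Third, and most usefully, $P$ is backward invariant: each $g\in G$ is non-constant, hence open and discrete by standard quasiregular theory, so for $\eta\in P$ and $\xi\in g^{-1}(\eta)$ one can lift a sequence $\eta_k\to\eta$ in $J(G)\setminus\{\eta\}$ to a sequence $\xi_k\to\xi$ with $g(\xi_k)=\eta_k$; then $\xi_k\in g^{-1}(J(G))\subseteq J(G)$, and $\xi_k\neq\xi$ for all large $k$, so $\xi\in P$. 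Now, if $P$ contains even one non-exceptional point $\eta$, then $O^-(\eta)\subseteq P$ is infinite, so $P$ is a closed backward invariant set with at least $q$ points; by Proposition~\ref{prop:6} this forces $J(G)\subseteq P$, hence $P=J(G)$, and $J(G)$ is perfect.

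The main obstacle is therefore to rule out the degenerate possibility $P\subseteq\mathcal{E}(G)$, that is, that every accumulation point of $J(G)$ is one of the finitely many exceptional points (equivalently, that every non-exceptional point of $J(G)$ is isolated). Here I would use the blow-up property coming from Montel's theorem (Theorem~\ref{mini}): if $\zeta\in J(G)$ had a neighbourhood $U$ with $U\cap J(G)=\{\zeta\}$, then $G|_U$ cannot omit $q$ points, so $\bigcup_{g\in G}g(U)$ misses at most $q-1$ points of $S^n$; combined with backward invariance this shows that $J(G)$, apart from at most $q-1$ points, is contained in the single forward orbit $\{g(\zeta):g\in G\}$. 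Since every $g\in G$ is non-injective, hence a branched cover of $S^n$ of degree at least two, backward orbits inside $J(G)$ branch and proliferate, and the containment of the whole infinite, backward invariant Julia set in one forward trajectory should be untenable; this is the step requiring genuine care. An alternative, more classical route following Beardon's argument for a single map would instead choose the seed $w$ to lie off the forward orbit of the target point, so that $\zeta\notin O^-(w)$ and $\zeta$ is forced to be a genuine accumulation point of $O^-(w)\subseteq J(G)$, with the same non-injectivity and finiteness of $\mathcal{E}(G)$ guaranteeing that such a seed exists.
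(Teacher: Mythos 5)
Your overall strategy is the paper's strategy: form the derived set $P$ of $J(G)$, show it is non-empty, closed and backward invariant, and then use the minimality statement of Proposition~\ref{prop:6} to force $P=J(G)$. Everything you actually prove is correct, and in places more careful than the paper's own text: the lifting argument via openness and discreteness is exactly what is needed to justify backward invariance of $P$ (the paper only asserts it), and the non-emptiness argument via Proposition~\ref{prop:4}, Proposition~\ref{prop:2} and compactness is fine, as is the reduction ``if $P$ contains one non-exceptional point we are done.'' But the proof is not complete: the case $P\subseteq\mathcal{E}(G)$ is the crux, and you leave it open. Neither of your sketches closes it. In the first, the claim that $J(G)$ cannot lie in a single forward orbit $O^+(\zeta)$ together with at most $q-1$ further points is never derived, and it cannot be dismissed by counting or by an appeal to ``branching'': at this stage of the argument $J(G)$ could a priori be countable (finitely many convergent sequences of isolated points), so a countable forward orbit could conceivably contain it. In the second, the existence of a seed $w\in J(G)\setminus\bigl(O^+(\zeta)\cup\mathcal{E}(G)\bigr)$ is not ``guaranteed'' by non-injectivity and finiteness of $\mathcal{E}(G)$; if no such seed exists, then $J(G)\subseteq O^+(\zeta)\cup\mathcal{E}(G)$, which is precisely the unresolved situation of your first sketch. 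So your two routes are the same missing step, not two proofs of it.

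The paper closes this case by the exceptional-point mechanism you stopped just short of: since the derived set is backward invariant, $O^-(p)$ is contained in it for every $p$ in it, so if it were finite every one of its points would be exceptional; the paper treats this as an immediate contradiction, the point being that exceptional points are superattracting in nature and belong to the Fatou set (the discussion from \cite{HMM} recalled before Lemma~\ref{lem:superattr}, and the remark following the definition of $\mathcal{E}(G)$, stated there for finitely generated $G$), whereas $P$ is a non-empty subset of $J(G)$. If you want a fix that avoids leaning on that remark, use single-map input, which is available from results already quoted in the paper: fix any $g\in G$; then $J(g)\subseteq J(G)$ by Proposition~\ref{prop:1}, so every accumulation point of $J(g)$ lies in $P$; and $J(g)$ is perfect, because Beardon's classical two-case argument runs verbatim for uqr maps (take the seed to be $\zeta$ itself if $\zeta\in J(g)$ is not periodic under $g$, and a point of $g^{-m}(\zeta)\setminus\{\zeta\}$ if $g^m(\zeta)=\zeta$; the required seed exists precisely because $\mathcal{E}(g)\cap J(g)=\emptyset$ for a single uqr map, by \cite{HMM}). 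Hence $J(g)\subseteq P$, so $P$ is infinite, contradicting $P\subseteq\mathcal{E}(G)$, which has at most $q-1$ elements by Proposition~\ref{prop:5}; Proposition~\ref{prop:6} then finishes as you intended. Note that this is where non-injectivity genuinely enters the proof, through the structure of exceptional points of a single uqr map, rather than through any proliferation argument on semigroup orbits.
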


\begin{proof}
Since each element $g\in G$ is non-injective, $J(g)$ contains infinitely many points for each $g\in G$. Therefore, by Proposition \ref{prop:1}, at least one point of $J(G)$ is not exceptional and consequently $J(G)$ contains infinitely many elements by backward invariance. If we let $J'$ denote the derived set of $J(G)$, that is, the set of accumulation points of $J(G)$, then $J'$ is non-empty, closed and backward invariant. The set $J'$ cannot be finite since it would then be exceptional and so $J' = J(G)$ by Proposition \ref{prop:6}. It follows that $J(G)$ cannot have any isolated points.
\end{proof}

\section{Further results on quasiregular semigroups}

In this section, we prove some results on quasiregular semigroups where we have to approach the proofs in at least a somewhat different manner to the corresponding proofs in rational semigroup theory. This will also require some refinements of results in uqr dynamics. The main aim of this section is to prove that if $G$ is a quasiregular semigroup satisfying a certain uniform H\"older condition, then $J(G)$ is uniformly perfect.

We recall from \cite{HMM} that a fixed point of a uqr map $f$ is called {\it superattracting} if it is contained in the branch set of $f$. It follows from the local H\"older estimates for quasiregular maps \cite[Theorem III.4.7]{Rickman} (see \cite[p.87]{HMM}) that if $x_0=0$ is superattracting for a $K$-uqr map $f$ with local index $i=i(0,f) \geq 2$, then for every $k\in \N$, there is a neighbourhood $U$ of $0$ and a constant $C>0$ so that
\begin{equation}
\label{eq:holder}
|f^k(x)| \leq C|x|^{\mu},
\end{equation}
for all $x\in U$, where $\mu = (i^k/K)^{1/(n-1)}$. Hence for $i^k>K$ large enough, there exists $r_0>0$ so that if $r<r_0$ then $\overline{f^k(B(0,r))} \subset B(0,r)$. For our purposes, we need to know that this is true for $k=1$, but as far as we are aware, this result is not available in the literature. We observe that this property is one way to define attracting fixed points for uqr maps if $i(x_0,f) = 1$.

In \cite{HMM}, the idea of generalized derivatives is introduced to classify fixed points of uqr maps. Again assuming the fixed point is $x_0 =0$, if we define
\begin{equation}
\label{eqfj} 
f_{\lambda}(x) = \lambda f \left ( \frac{x}{\lambda} \right )
\end{equation}
then generalized derivatives arise as limits $f_{\lambda_j}$
through sequences $\lambda_j \to \infty$. It follows from Theorem \ref{mini} and distortion estimates when $i(0,f)=1$ (see \cite[p.87]{HMM}) that we can find a subsequence along which $f_{\lambda_j}$ converges to a quasiregular map.

\begin{lemma}
\label{lem:superattr}
Let $U \subset S^n$ be a domain and $f:U \to S^n$ a $K$-uqr map and $x_0$ a superattracting fixed point of $f$ with $x_0 \in U$. Then there exists $r_0>0$ so that if $r<r_0$ then $\overline{f(B_{\chi}(x_0,r) ) } \subset B_{\chi}(x_0,r)$.
\end{lemma}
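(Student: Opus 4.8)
The plan is to reduce everything to a Euclidean statement at the fixed point and then to upgrade the known contraction of a high iterate to the single map by a renormalization argument. First I would normalize $x_0=0$ and work in Euclidean coordinates on a ball $B(0,\rho_0)\subset U$; since the chordal and Euclidean metrics are bi-Lipschitz equivalent with constants tending to $1$ as $\rho_0\to 0$, it suffices to find $r_0$ so that $L(0,r)<r$ for all $r<r_0$, where $L(0,r)$ is as in \eqref{eq:Ll}. Note $\max_{|x|\le r}|f(x)|=L(0,r)$ because $f(0)=0$ and $f$ is open, so $L(0,r)<r$ is exactly equivalent to $\overline{f(B(0,r))}\subset B(0,r)$. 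Since $f$ is $K$-uqr we have $i(0,f^k)=i^k$ while $K(f^k)\le K$, so the H\"older estimate recalled before the lemma gives $|f^k(x)|\le C_k|x|^{\mu_k}$ with $\mu_k=(i^k/K)^{1/(n-1)}\to\infty$; fixing $k$ with $\mu_k>1$ yields $\overline{f^k(B(0,r))}\subset B(0,r)$ for all small $r$. The whole difficulty is that the exponent $\mu_1=(i/K)^{1/(n-1)}$ for the map itself may be $\le 1$, so this estimate alone says nothing useful about $L(0,r)$ when $k=1$; indeed openness readily gives $l(0,r)<r$ for the inner radius, but it is the outer radius $L(0,r)$ that must be controlled.

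To close the gap I would argue by contradiction. Suppose the conclusion fails: there is a sequence $r_m\to 0$ and points $x_m$ with $|x_m|=r_m$ and $|f(x_m)|\ge r_m$. Set $\lambda_m=1/r_m$ and consider the rescalings $f_{\lambda_m}(x)=\lambda_m f(x/\lambda_m)$ from \eqref{eqfj}. Each $f_{\lambda_m}$ is $K$-uqr, being conjugate to $f$ by the similarity $x\mapsto\lambda_m x$, fixes $0$, and satisfies the compatibility $(f_{\lambda_m})^k=(f^k)_{\lambda_m}$. Writing $y_m=\lambda_m x_m$, which lies on the unit sphere, the failure assumption reads $|f_{\lambda_m}(y_m)|=\lambda_m|f(x_m)|\ge 1$. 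On the other hand the iterate estimate gives $|(f_{\lambda_m})^k(x)|=\lambda_m|f^k(x/\lambda_m)|\le C_k\lambda_m^{1-\mu_k}|x|^{\mu_k}$, and since $\mu_k>1$ this tends to $0$ locally uniformly. Thus the rescaled iterates converge to the constant map $0$, while the rescaled maps themselves do not collapse to $0$ on the unit sphere.

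If I can extract a subsequence of $\{f_{\lambda_m}\}$ converging locally uniformly to a limit $g$, then $g$ is either constant or $K$-quasiregular. Since $g(0)=\lim f_{\lambda_m}(0)=0$ and, after passing to a further subsequence with $y_m\to y_*\in S^n$, local uniform convergence forces $|g(y_*)|\ge 1$, the limit $g$ is nonconstant, hence $K$-quasiregular, hence open and discrete. Passing to the limit in $(f_{\lambda_m})^k=(f^k)_{\lambda_m}$ and using continuity of composition under local uniform convergence gives $g^k\equiv 0$. But a $k$-fold composition of a nonconstant open, discrete map is again nonconstant, open and discrete, and so cannot be constant. This contradiction would prove the lemma.

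The step I expect to be the main obstacle is exactly the extraction of a nonconstant limit $g$. At a branch fixed point the naive blow-up degenerates: for $z\mapsto z^2$ one has $f_\lambda\to 0$, so absent the failure assumption the rescalings collapse to a constant, and Montel's theorem (Theorem~\ref{mini}) does not apply directly since the $f_{\lambda_m}$ need not omit a common set of $q$ values. To obtain compactness I would instead renormalize, replacing $f_{\lambda_m}$ by $g_m(x)=f(r_m x)/L(0,r_m)$, which is $K$-quasiregular, fixes $0$, and has $\max_{|x|=1}|g_m|=1$; the uniform bound on linear distortion (Theorem~\ref{thm:linear}) together with the local H\"older continuity of $K$-quasiregular maps makes $\{g_m\}$ equicontinuous on compacta, so that an Arzel\`a--Ascoli argument yields a nonconstant $K$-quasiregular limit. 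The remaining technical point is then to transfer the iterate-contraction through this normalization, which amounts to comparing the scales $s_m^{(j)}=\max_{|x|=r_m}|f^j(x)|$ for $j=0,1,\dots,k$ (with $s_m^{(0)}=r_m$): the failure assumption forces $s_m^{(1)}\ge r_m$ while the iterate contraction forces $s_m^{(k)}<r_m$, and the work lies in showing that the blow-up limit is scale-invariant enough that a single step cannot expand while $k$ steps contract. This bookkeeping, rather than any one hard inequality, is where I expect the genuine effort to be concentrated.
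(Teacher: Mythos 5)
Your overall strategy coincides with the paper's: blow up at the fixed point via \eqref{eqfj}, use the identity $(f_\lambda)^k=(f^k)_\lambda$ together with the contraction \eqref{eq:holder} of a high iterate to force any limit of rescalings to have identically vanishing $k$-th iterate, and derive a contradiction because the failure assumption produces a nonconstant limit. However, there is a genuine gap exactly where you say the ``genuine effort'' lies, and you do not close it. The renormalization $g_m(x)=f(r_mx)/L(0,r_m)$ that you introduce to get compactness destroys the only structural fact the argument runs on: $g_m$ is not a conjugate of $f$ (the inner scaling is by $r_m$, the outer by $L(0,r_m)$), so $g_m^k$ bears no useful relation to $f^k$, and a limit of the $g_m$ cannot be fed into the iterate-contraction argument unless you know that $L(0,r_m)/r_m$ stays bounded --- in which case $g_m$ and $f_{\lambda_m}$ differ by a bounded scalar and the change of normalization was unnecessary in the first place. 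So your whole proof reduces to precisely the estimate you defer as ``bookkeeping'', and that estimate is not routine: it is the real content of the lemma.

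The paper closes this gap with a short two-step argument. First, $l(0,r)\le r$ for all small $r$: otherwise there are $r_j\to 0$ with $l(0,r_j)>r_j$, which forces $\overline{f(B(0,r_j))}\supset B(0,r_j)$, and iterating this inclusion contradicts the contraction $|f^k(x)|\le C|x|^2$ of \eqref{eq:fk}. Second, Theorem \ref{thm:linear} bounds the linear distortion, giving $L(0,r)\le 2C'\,l(0,r)\le 2C'r$ for small $r$, i.e.\ the uniform Lipschitz estimate $|f(x)|\le 2C'|x|$ near $0$. This estimate does two jobs at once: it makes the natural rescalings $f_\lambda$ locally uniformly bounded, hence a normal family, so generalized derivatives of $f$ itself exist and the conjugacy identity $(f_\lambda)^k=(f^k)_\lambda$ survives passage to the limit; and it is exactly the statement that $L(0,r_m)/r_m$ is bounded, which is what your bookkeeping requires. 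With it, your limiting argument goes through verbatim; without it, the proposal is not a proof. (A smaller unproved point: equicontinuity of your $g_m$ on compacta needs uniform boundedness there, i.e.\ a doubling bound of the form $L(0,Rr_m)\le C(R)\,L(0,r_m)$, which you also do not establish; the Lipschitz estimate above supersedes this as well.)
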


\begin{proof}
There is no loss of generality in assuming $x_0 = 0$, and then we can pass to the Euclidean metric and use \eqref{eq:holder}.
By \eqref{eq:holder}, we can find $C>0$, $r_0>0$ and $k\in \N$ large enough so that 
\begin{equation}
\label{eq:fk} 
|f^k(x)| \leq C|x|^2,
\end{equation}
for $|x| < r_0$. We then consider generalized derivatives for $f^k$ at $0$. In fact, we have for $|x| < \lambda r_0$ that
\[ | (f^k)_{\lambda} (x) |  = \lambda \left | f^k \left (  \frac{x}{\lambda} \right )  \right | \leq \frac{C|x|^2}{\lambda}.\]
Consequently, through any sequence $\lambda_j \to \infty$, the limit of $(f^k)_{\lambda_j}$ must be identically $0$.

Next, we need to show that we can consider generalized derivatives of $f$ itself. To that end, first note that there exists $r_1>0$ so that if $r<r_1$, then $l(0,r)\leq r$, recalling \eqref{eq:Ll}. If this is not true, then there is a sequence $r_j \to 0$ with $l(0,r_j) > r_j$. However, this contradicts \eqref{eq:fk} because then $ \overline{ f(B(0,r_j) ) } \supset B(0,r_j)$. Next, apply Theorem \ref{thm:linear} to find $r_2>0$ so that if $r<r_2$ then 
\[ \frac{L(0,r)}{l(0,r)} \leq 2C',\]
where $C'$ depends only on $n$ and $i(0,f)K_O(f)$. In particular, this implies that if $r< \min \{r_1,r_2 \}$ then $|f(x)| \leq 2C'|x|$ for $|x|\leq r$. This uniform Lipschitz estimate implies that $f$ has generalized derivatives at $0$.

Since $(f_{\lambda})^k = (f^k)_{\lambda}$, it follows that if $\varphi$ is any generalized derivative of $f$, then $\varphi ^k$ must be identically $0$. Consequently, so must $\varphi$. 

Now suppose that the result claimed in the lemma is false, that is, we can find a sequence $r_j \to 0$ and $x_j$ with $|x_j| = r_j$ and $|f(x_j)| \geq |x_j|$. Consider $f_{\lambda_j}$, where $\lambda_j = r_j^{-1}$ and set $y_j = \lambda_jx_j$. Since $|y_j|=1$, we can assume that we have passed to a subsequence where both $y_j$ converges to some point of the unit sphere, and $f_{\lambda_j}$ converges to a generalized derivative $\varphi$ of $f$. But then
\[ |f_{\lambda_j}(y_j)| = \lambda_j | f(x_j)| \geq \lambda_j|x_j|  = 1.\]
Consequently $\varphi$ is non-constant, contradicting the fact it must be identically zero.
\end{proof}

We next require the following uqr analogue of \cite[Theorem 6.9.4]{Beardon}.

\begin{lemma}
\label{lem:blowup}
Let $f:S^n \to S^n$ be a $K$-uqr map that is not injective, let $U$ be a domain that meets $J(f)$ and let $V$ be a compact set that does not meet $\mathcal{E}(f)$. Then there exists $M\in \N$ so that for $m\geq M$, $f^m(U) \supset V$.
\end{lemma}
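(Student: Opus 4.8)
The plan is to first show that the forward images of $U$ exhaust all of $S^n$ apart from finitely many exceptional points, and then to promote this soft covering statement into the uniform one, that a single power $f^m$ already contains $V$ for every large $m$. Write $\Omega = \bigcup_{m\geq 1} f^m(U)$. Since $U$ meets $J(f)$, the family $\{f^m\}$ fails to be normal on $U$, so Theorem \ref{mini} forbids the images $f^m(U)$ from having $q=q(n,K)$ common omitted values: if $S^n\setminus\Omega$ contained $q$ distinct points, then every $f^m$ would map $U$ into their complement and $\{f^m|_U\}$ would be normal, a contradiction. Hence $S^n\setminus\Omega$ has at most $q-1$ points.

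Next I would identify this omitted set as exceptional. Because $f(\Omega)\subseteq\Omega$, its complement $E = S^n\setminus\Omega$ satisfies $f^{-1}(E)\subseteq E$, so $E$ is a finite, backward invariant set. Each point of $E$ then has its entire backward orbit contained in the finite set $E$, so $E\subseteq\mathcal{E}(f)$. Since $V$ is compact with $V\cap\mathcal{E}(f)=\emptyset$, this gives $V\subseteq\Omega$; as each $f^m(U)$ is open (non-constant quasiregular maps are open) and $V$ is compact, $V$ is contained in a finite union $\bigcup_{m=1}^{m_0} f^m(U)$.

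The remaining and genuinely delicate point is to replace this finite union by a single power valid for all large $m$. For this I would fix a Julia point $\zeta\in U\cap J(f)$, choose a ball $N$ with $\zeta\in N$, $\overline{N}\subseteq U$ and $\overline{N}\cap\mathcal{E}(f)=\emptyset$, and produce an integer $p\geq 1$ with $f^p(N)\supseteq\overline{N}$. Granting this self-covering, the family $\{f^{jp}(N)\}_j$ is increasing, since $f^{(j+1)p}(N)=f^{jp}(f^p(N))\supseteq f^{jp}(N)$. Moreover $f^p$ is again a non-injective $K$-uqr map with $J(f^p)=J(f)$ and $\mathcal{E}(f^p)=\mathcal{E}(f)$, so the first two steps applied to $f^p$ and $N$ give $\bigcup_j f^{jp}(N)=S^n\setminus\mathcal{E}(f)$. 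Thus any compact subset of $S^n\setminus\mathcal{E}(f)$ lies in a single $f^{jp}(N)$ for all large $j$. I would apply this to $V^{*}=\bigcup_{r=0}^{p-1} f^{-r}(V)$, which is compact and disjoint from $\mathcal{E}(f)$ by complete invariance of $\mathcal{E}(f)$ and surjectivity of $f$; writing an arbitrary large $m$ as $m=jp+r$ with $0\leq r<p$ then yields $f^m(U)\supseteq f^m(N)=f^r\bigl(f^{jp}(N)\bigr)\supseteq f^r(V^{*})\supseteq V$, which is the claim with a uniform threshold $M$.

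I expect the main obstacle to be producing the single-power self-covering $f^p(N)\supseteq\overline{N}$. The blow-up property only yields $\overline{N}\subseteq\bigcup_m f^m(N)$, a union over many powers, and collapsing this to one power requires genuine local expansion at the Julia point $\zeta$, which the covering argument alone does not provide. I would obtain it either from a repelling periodic point in $N$, or, in the spirit of the distortion estimates behind Lemma \ref{lem:superattr} (using Theorem \ref{thm:linear} together with generalized derivatives), from the expansion that non-normality of $\{f^m\}$ forces at $\zeta$.
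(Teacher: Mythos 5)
Your first two steps are sound: the Montel argument (Theorem \ref{mini}) showing that $\Omega=\bigcup_{m\geq 1} f^m(U)$ omits at most $q-1$ points, and the identification of the omitted set as a finite, backward-invariant set contained in $\mathcal{E}(f)$, are both correct, and your arithmetic reduction of the uniform statement to a single self-covering $f^p(N)\supseteq\overline{N}$ (via $V^{*}=\bigcup_{r=0}^{p-1}f^{-r}(V)$ and writing $m=jp+r$) is also fine as far as it goes. The genuine gap is exactly the step you flag: producing $p$ with $f^p(N)\supseteq\overline{N}$ at a Julia point. Your first proposed source for this, a repelling periodic point in $N$, is unavailable in this setting: as the paper itself points out in the discussion before Proposition \ref{prop:8}, periodic points of a uqr map are dense in its Julia set, but it is an \emph{open problem} whether the repelling ones are — and this is precisely why the classical proof of \cite[Theorem 6.9.4]{Beardon}, which is the argument you are reconstructing, does not transfer to uqr maps. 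Your second proposed source, ``expansion forced by non-normality'' via generalized derivatives, is not a proof: the distortion estimates and generalized derivatives behind Lemma \ref{lem:superattr} operate at a \emph{fixed point} (indeed a superattracting one), not at an arbitrary point of $J(f)$, and non-normality of $\{f^m\}$ near $\zeta$ by itself yields no single-iterate expansion.

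The paper's proof avoids needing any expansion at a Julia point. It imports the increasing exhaustion from \cite[Proposition 3.2]{HMM}: there is a subdomain $U'\subseteq U$ and an integer $q$ so that $f^{mq}(U')$ is an increasing sequence of open sets exhausting $S^n\setminus\mathcal{E}(f)$; this replaces your self-covering. The uniformity over all $m$, not just multiples of $q$, is then obtained from Lemma \ref{lem:superattr}: every point of $\mathcal{E}(f)$ lies on a superattracting cycle, so there is a neighbourhood $W$ of $\mathcal{E}(f)$, disjoint from $V$, with $f(W)\subset W$. Hence $Q=S^n\setminus W$ satisfies $f(Q)\supseteq Q$ by surjectivity of $f$, and once $Q\subseteq f^t(U')$ one gets $f^m(U)\supseteq f^{m-t}\bigl(f^t(U')\bigr)\supseteq f^{m-t}(Q)\supseteq Q\supseteq V$ for all $m\geq t$. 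To complete your own argument you would either have to prove the self-covering at a Julia point (which amounts to confronting the repelling-density problem) or substitute, as the paper does, the cited exhaustion result together with the forward-invariant trap around $\mathcal{E}(f)$.
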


\begin{proof}
We know that $\mathcal{E}(f)$ is a finite set. By the discussion on \cite[p. 83]{HMM}, each $x\in \mathcal{E}(f)$ is a fixed point of $f^m$ for some $m$ with $f^{-m}(x) = \{ x\}$. It follows that each element of $\mathcal{E}(f)$ is part of a superattracting cycle. By applying Lemma \ref{lem:superattr}, we can find a neighbourhood $N$ of $\mathcal{E}(f)$ so that $f(N) \subset N$. If $\mathcal{E}(f)$ is empty, we can take $N = \emptyset$, otherwise we can assume $N$ is small enough so that $N$ and $V$ are disjoint.

As $U$ meets $J(f)$, by \cite[Proposition 3.2]{HMM} we can find a neighbourhood $U'$ of some point in $J(f)$ so that $U '\subset U$ and there exists $q\in \N$ so that $f^{mq}(U')$ is an increasing sequence of open sets exhausting $S^n \setminus \mathcal{E}(f)$.

Let $Q = S^n \setminus N$. Then $Q$ is compact, $V \subset Q$ and since $f(N) \subset N$, we have $f(Q) \supset Q$. Since $f^{mq}(U')$ is increasing, there exists $t$ so that $Q \subset f^t(U')$. Then for $m\geq t$,
\[ f^m(U) \supset f^m(U') \supset f^{m-t}f^t(U') \supset f^{m-t}(Q) \supset Q \supset V.\]
\end{proof}

Our next lemma shows that the Julia sets of certain compositions accumulate on the pre-image of a given Julia set, and may even be new for rational functions.

\begin{lemma}
\label{lem:prop8}
Let $g,h$ be uqr maps $S^n \to S^n$, for $n\geq 2$, so that $G=<g,h>$ is a quasiregular semigroup.
For any $\epsilon >0$, there exists $M \in \N$ so that if $m\geq M$ and $x\in g^{-1}(J(h))$, there exists $y \in J(h^mg)$ with $\chi (x,y) < \epsilon$.
\end{lemma}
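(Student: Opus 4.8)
The plan is to locate the required $y$ by pulling a point of $J(h^mg)$ back first through $h^m$ and then through $g$ into $B_\chi(x,\epsilon)$. Write $\phi=h^mg$, a single non-injective $K$-uqr map whose Julia set is completely invariant, $J(\phi)=\phi^{-1}(J(\phi))$. First I would assemble two uniform ingredients. Since $g$ is open and discrete and $g^{-1}(J(h))$ is compact, a compactness argument yields $\delta>0$, depending only on $\epsilon$, with $g(B_\chi(x,\epsilon))\supseteq B_\chi(g(x),\delta)$ for every $x\in g^{-1}(J(h))$. Next, fix a neighbourhood $N$ of $\mathcal E(h)$ with $h(N)\subseteq N$ (via Lemma \ref{lem:superattr}, exactly as in the proof of Lemma \ref{lem:blowup}) and $\overline N\cap J(h)=\emptyset$. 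As each $B_\chi(g(x),\delta)$ meets $J(h)$, Lemma \ref{lem:blowup} applied to $h$, made uniform over the compact set $J(h)$, produces $M\in\N$ depending only on $\delta$, hence only on $\epsilon$, so that $h^m(B_\chi(g(x),\delta))\supseteq S^n\setminus N$ for all $m\ge M$ and all such $x$.

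Granting the key claim that $J(\phi)\not\subseteq N$, the lemma follows at once. Choose $\zeta\in J(\phi)\cap(S^n\setminus N)$. Then $\zeta\in S^n\setminus N\subseteq h^m(B_\chi(g(x),\delta))\subseteq h^m(g(B_\chi(x,\epsilon)))=\phi(B_\chi(x,\epsilon))$, so there is $y\in B_\chi(x,\epsilon)$ with $\phi(y)=\zeta\in J(\phi)$; by backward invariance $y\in \phi^{-1}(J(\phi))=J(\phi)=J(h^mg)$, as required.

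It remains to prove $J(\phi)\not\subseteq N$, which I regard as the heart of the matter. The main tool is the identity $g(J(\phi))=h^{-m}(J(\phi))$, obtained from $J(\phi)=\phi^{-1}(J(\phi))=g^{-1}(h^{-m}(J(\phi)))$ by applying $g$ and using surjectivity of $g$. If $J(\phi)$ contains a point $\zeta$ at a definite distance from $\mathcal E(h)$, then applying Lemma \ref{lem:blowup} at two balls centred at far-apart points of $J(h)$ shows that, for large $m$, $h^{-m}(\zeta)$ meets both balls, so $\diam_\chi h^{-m}(\zeta)$ is bounded below independently of $m$; since $h^{-m}(\zeta)\subseteq h^{-m}(J(\phi))=g(J(\phi))$, the set $J(\phi)$ cannot lie in a set whose $g$-image is tiny, and in particular $J(\phi)\not\subseteq N$ once $N$ is small. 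This settles the claim except in the degenerate case that $J(\phi)$ is squeezed into an arbitrarily small neighbourhood of $\mathcal E(h)$.

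Excluding this last case is the step I expect to be the main obstacle, and I would attack it through the local behaviour of $\phi$ at each $e\in\mathcal E(h)$. If $g(e)$ lies in the attracting basin of $\mathcal E(h)$ under $h$, then for small $\rho$ and large $m$ one has $\phi(B_\chi(e,\rho))\subseteq N$ with $\phi(N)\subseteq N$, so the iterates $\{\phi^j\}$ on $B_\chi(e,\rho)$ omit the $q$ points of $S^n\setminus N$ and by Theorem \ref{mini} this ball lies in $F(\phi)$, whence $J(\phi)$ avoids $e$. If instead $g(e)\in J(h)$, then $g(B_\chi(e,\rho))$ meets $J(h)$, so $h^m$ blows it up and $\phi(B_\chi(e,\rho))\supseteq S^n\setminus N$ for every small $\rho$; thus $\{\phi^j\}$ fails to be equicontinuous at $e$ and $e\in J(\phi)$. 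Feeding such a Julia point into $J(\phi)=g^{-1}(h^{-m}(J(\phi)))$, and using that $h^{-m}(e)$ remains in the finite backward orbit of the exceptional point $e$ while $g^{-1}$ of those finitely many points is generically not concentrated near $\mathcal E(h)$, should produce a point of $J(\phi)$ at a definite distance from $\mathcal E(h)$, ruling out the degenerate case; the sole remaining configuration, where $\mathcal E(h)$ is itself backward invariant under $g$, I would handle separately. I anticipate this clustering analysis at $\mathcal E(h)$ to be the most delicate part of the argument.
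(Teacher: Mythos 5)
Your reduction is correct and in fact mirrors the mechanism of the paper's own proof: cover a suitable set, use Lemma \ref{lem:blowup} to make $h^mg$ blow it up over a fixed target, and pull a point of $J(h^mg)$ back by complete invariance. Both of your uniformity ingredients (uniform openness of $g$ over the compact set $g^{-1}(J(h))$, and a uniform $M$ in Lemma \ref{lem:blowup} obtained by covering $J(h)$) are fine. The problem is that the whole proof then rests on the key claim that, for a fixed small $h$-invariant neighbourhood $N$ of $\mathcal{E}(h)$, one has $J(h^mg)\not\subseteq N$ for all large $m$, and your argument for that claim does not go through. The step based on the identity $g(J(\phi))=h^{-m}(J(\phi))$ is circular: its hypothesis, that $J(\phi)$ contains a point at a definite distance from $\mathcal{E}(h)$, is precisely the statement to be proved (such a point lies outside $N$ once $N$ is small), so nothing is gained. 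The ``degenerate case'' analysis, which you correctly identify as carrying all the weight, has three genuine defects: (i) the dichotomy ``$g(e)$ in the attracting basin of $\mathcal{E}(h)$'' versus ``$g(e)\in J(h)$'' is not exhaustive, since $g(e)$ may lie in $F(h)$ without being attracted to $\mathcal{E}(h)$ (for instance in the basin of a non-exceptional attracting fixed point of $h$); (ii) in the case $g(e)\in J(h)$, the inference ``$\phi(B_\chi(e,\rho))\supseteq S^n\setminus N$ for every small $\rho$, hence $e\in J(\phi)$'' contains a quantifier error: for \emph{fixed} $m$ the map $\phi=h^mg$ is continuous, so $\phi(B_\chi(e,\rho))$ shrinks to the point $\phi(e)$ as $\rho\to 0$; the blow-up only holds for $m\geq M(\rho)$ with $M(\rho)\to\infty$ as $\rho\to 0$, so no non-equicontinuity of $\{\phi^j\}$ at $e$ follows; (iii) the closing passage (``generically not concentrated near $\mathcal{E}(h)$ \dots\ handle separately'') is not an argument.

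The missing idea is exactly what the paper supplies by splitting into cases according to whether $g^{-1}(J(h))$ meets $\mathcal{E}(h)$. When it does not, the paper works with a compact neighbourhood $\overline{U_\delta}$ of $g^{-1}(J(h))$ disjoint from $\mathcal{E}(h)$, blows each covering ball $B_i$ up over $\overline{U_\delta}$, and uses that $J(h^mg)\subseteq (h^mg)(B_i)$ together with complete invariance to place points of $J(h^mg)$ in every $B_i$; in particular $J(h^mg)$ is located away from $\mathcal{E}(h)$, so your key claim holds in this case. When $g^{-1}(J(h))$ does meet $\mathcal{E}(h)$, the paper chooses $y\in J(h)$ and a preimage component $E$ of $B_\chi(y,\eta)$ with $\overline{E}\cap\mathcal{E}(h)=\emptyset$, notes that $(h^mg)(E)\supseteq\overline{E}$ for large $m$ by Lemma \ref{lem:blowup}, and then runs a nested preimage construction: compact sets $E_0=\overline{E}\supseteq E_1\supseteq E_2\supseteq\cdots$ with $(h^mg)(E_{j+1})=E_j$, whose nonempty intersection $E_\infty$ lies in $J(h^mg)$. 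That construction manufactures points of $J(h^mg)$ inside the fixed compact set $\overline{E}$, at a definite distance from $\mathcal{E}(h)$ uniformly in $m$ --- precisely the statement your proof needs and cannot reach. So your skeleton is sound, but its heart (the key claim) is essentially as hard as the lemma itself, and completing it requires something like the paper's two-case analysis, in particular the nested preimage trick.
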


\begin{proof}
\begin{comment}
Given $X \subset S^n$ and $\epsilon >0$, denote by $N_{\epsilon}(X)$ the open $\epsilon$-neighbourhood of $X$ in the chordal metric.
First suppose that $g^{-1}(J(h)) \cap \mathcal{E}(h) = \emptyset$. Denote by $U_{\epsilon}$ the set $N_{\epsilon}( g^{-1}(J(h)) )$. Given $\epsilon >0$ small enough so that $\overline{U_{\epsilon} } \cap \mathcal{E}(h) = \emptyset$, find $\delta>0$ so that
\begin{equation}
\label{eq:ue} 
g^{-1}( N_{\delta}(J(h) ) ) \subset U_{\epsilon}.
\end{equation}
Now if $V \subset S^n \setminus \mathcal{E}(h)$ is compact, then given $\delta > 0$, there exists $M_1 > 0$ so that if $m\geq M_1$, we have $h^{-m}(V) \subset N_{\delta} (J(h) )$. Applying this to $V = \overline{U_{\epsilon} }$, by \eqref{eq:ue} we have that for $m\geq M_1$,
\[ (h^mg)^{-1} (\overline{U_{\epsilon} } ) \subset U_{\epsilon} .\]
Since $U_{\epsilon}$ must contain a non-exceptional point f $h^mg$, say x, and for such a point we have $J(h^mg) \subset \overline{O^-(x)}$, it follows that $J(h^mg) \subset U_{\epsilon}$ for $m\geq M_1$.
\end{comment}

Given $X \subset S^n$ and $\delta >0$, denote by $N_{\delta}(X)$ the open $\delta$-neighbourhood of $X$ in the chordal metric. Denote by $U_{\delta}$ the set $N_{\delta}( g^{-1}(J(h)) )$.

We will first assume that $g^{-1}(J(h)) \cap \mathcal{E}(h) = \emptyset$. We may suppose that $\delta >0$ is small enough that $\overline{U_{\delta} } \cap \mathcal{E}(h) = \emptyset$.
Given $r>0$, find finitely many points $x_1,\ldots, x_k \in S^n$ so that $B_i = B_{\chi}(x_i,r)$ forms an open cover of the compact set $g^{-1}(J(h))$. Since each $g(B_i)$ is an open set intersecting $J(h)$, by Lemma \ref{lem:blowup} we can find $M_1 \in \N$ so that
\[ \overline{U_{\delta}} \subset (h^mg) (B_i)\]
for $m\geq M_1$ and $i=1,\ldots, k$. If $m\geq M_1$, we have $J(h^mg) \subset (h^mg)(B_i)$ for $i=1,\ldots,k$. By complete invariance, it follows that for $i=1,\ldots, k$ there exist points of $J(h^mg)$ in $B_i$. Since this argument applies for arbitrarily small $r$, it follows that given $\epsilon >0$, there exists $M\in \N$ so that if $m\geq M$,
\[ g^{-1}(J(h)) \subset N_{\epsilon}( J(h^mg) ).\]

Next, suppose that $g^{-1}(J(h)) \cap \mathcal{E}(h) \neq \emptyset$. Since $\mathcal{E}(h)$ is a finite set and $J(h)$ is infinite, we can find $y \in J(h)$ and $z \in g^{-1}(y)$ so that $z \notin \mathcal{E} (h)$. We may then find $\eta>0$ so that the component $E$ of $g^{-1}( B_{\chi}(y,\eta))$ containing $z$ satisfies $\overline{E} \cap \mathcal{E}(h) = \emptyset$.
Since $B_{\chi}(y,\eta) \cap J(h) \neq \emptyset$, there exists $M_2 \in \N$ so that if $m \geq M_2$, then $E \subset h^m ( B_{\chi} (y,\eta) )$. 

We find a sequence of nested closed sets, starting with $E_0 = \overline{E}$ and finding $E_1 \subset E_ 0$ so that if $m\geq M_2$ is fixed, then $(h^mg)(E_1) = E_0$. Continuing inductively, we find $E_{j+1} \subset E_j$ such that $(h^mg)(E_{j+1}) = E_j$. If we then set $E_{\infty} = \cap_{j\geq 0} E_j$, which is necessarily non-empty, we conclude that $E_{\infty} \subset J(h^mg)$. Consequently, $E \cap J(h^mg) \neq \emptyset$.

We now finish as above. Cover $g^{-1}(J(h))$ by finitely many balls $B_i$, for $i=1,\ldots, k$, of radius $r>0$ so that $g(B_i)$ intersects $J(h)$. Choose $M_3$ large enough so that if $m\geq M_3$, then $E \subset (h^mg)(B_i)$ for $i=1,\ldots, k$. By complete invariance, we conclude that $J(h^mg)$ meets each $B_i$. If we are therefore given $\epsilon>0$, we can find $M\in \N$ large enough so that if $m\geq M$ then $g^{-1}(J(h)) \subset N_{\epsilon}(J(h^mg))$.
\end{proof}

We are now in a position to prove the following characterization of $J(G)$.

\begin{proposition}
\label{prop:8}
We have \lefteqn{ J(G) = \overline{ \bigcup_{g\in G} J(g) }. }
\end{proposition}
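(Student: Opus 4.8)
Write $X = \overline{\bigcup_{g\in G} J(g)}$. The plan is to establish the two inclusions separately. The inclusion $X \subseteq J(G)$ is immediate, while the reverse inclusion $J(G) \subseteq X$ is the substantive part, which I would obtain by verifying that $X$ is a closed, backward invariant set containing at least $q$ points and then invoking Proposition \ref{prop:6}.

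For the easy inclusion, Proposition \ref{prop:1} gives $J(g) \subseteq J(G)$ for every $g\in G$, so $\bigcup_{g} J(g) \subseteq J(G)$; since $J(G)$ is closed, $X \subseteq J(G)$ follows. For the reverse inclusion, note first that $X$ is closed by construction, and that since each $g\in G$ is non-injective, $J(g)$ is infinite (as used in Propositions \ref{prop:5} and \ref{prop:7}), so $X$ contains infinitely many points and in particular at least $q$. The heart of the argument is backward invariance, namely that $\phi^{-1}(X)\subseteq X$ for every $\phi\in G$. To see this, fix $\phi\in G$ and a point $w$ with $\phi(w)\in X$, and choose a sequence $p_k\to \phi(w)$ with $p_k\in J(f_k)$ for some $f_k\in G$. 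Since $\phi$ is a non-constant quasiregular map, it is open, so the sequence lifts: there exist $w_k\to w$ with $\phi(w_k)=p_k$, whence $w_k\in \phi^{-1}(J(f_k))$. Now apply Lemma \ref{lem:prop8} to the pair $\phi, f_k$; these generate a sub-quasiregular-semigroup of $G$, and the compositions $f_k^m\phi$ all lie in $G$, so the lemma yields $\phi^{-1}(J(f_k)) \subseteq \overline{\bigcup_m J(f_k^m\phi)} \subseteq X$. Thus each $w_k\in X$, and since $X$ is closed, $w\in X$. This proves $\phi^{-1}(X)\subseteq X$ for all $\phi\in G$, which is exactly backward invariance of $X$.

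Having shown that $X$ is closed, backward invariant, and contains at least $q$ points, Proposition \ref{prop:6} gives $J(G)\subseteq X$, and combined with the easy inclusion this yields $J(G)=X$. The main obstacle is precisely the backward invariance of the closure: Lemma \ref{lem:prop8} only places $\phi^{-1}(J(f_k))$ inside the closure $X$, not inside $\bigcup_g J(g)$ itself, so one cannot simply argue that $\bigcup_g J(g)$ is backward invariant and must instead pass to limits. This is what forces the openness of quasiregular maps into the argument, as the device that lifts the approximating sequence $p_k$ to a sequence $w_k\to w$ whose membership in $X$ is controlled by the lemma.
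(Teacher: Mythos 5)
Your proof is correct and follows essentially the same route as the paper's: you verify that $X = \overline{\bigcup_{g\in G} J(g)}$ is closed, contains at least $q$ points, and is backward invariant via Lemma \ref{lem:prop8}, then invoke Proposition \ref{prop:6}. If anything, your argument is slightly more careful than the paper's, which only asserts $g^{-1}(J(h)) \subseteq X$ and leaves implicit the lifting-through-openness step you spell out to pass from preimages of $\bigcup_{g} J(g)$ to preimages of its closure.
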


In the case of rational semigroups, this result is proved (see \cite{HM}) by using the density of repelling fixed points of elements of $G$ in $J(G)$. In our situation, however, while it is known that periodic points of a uqr map are dense in its Julia set (see \cite{Siebert}), it is still open as to whether the repelling periodic points are dense. We will instead use Proposition \ref{prop:6} and Lemma \ref{lem:prop8}.

\begin{proof}[Proof of Proposition \ref{prop:8}]
For ease of notation, denote by $X$ the set $\overline{ \bigcup_{g\in G} J(g) }$. Clearly $X$ is closed. Moreover, $X$ cannot be any smaller and still contain $J(G)$ since $J(g) \subseteq J(G)$ for each $g\in G$. We then are done, by Proposition \ref{prop:6}, if we can show that $X$ is backward invariant. By Lemma \ref{lem:prop8}, $J(h^mg)$ accumulates on $g^{-1}(J(h))$. Since $h^mg \in G$ and $X$ is closed, it follows that $g^{-1}(J(h)) \subset X$ and we are done.
\end{proof}

The main aim now is to show that $J(G)$ is uniformly perfect, provided that $G$ satisfies a uniform H\"older condition. The idea of the proof combines ideas of Stankewitz \cite{Stank} for rational semigroups, and the author and Nicks \cite{FN} for uqr maps. First, we make a definition.

\begin{definition}
Let $\alpha >0$. Then the class of uniformly $\alpha$-H\"older maps is
\[\left \{ f:S^n \to S^n :  \operatorname{Lip}_{\alpha}(f) = \sup_{x,y \in S^n} \frac{ \chi (f(x) , f(y) )}{\chi( x,y)^{\alpha} } < \infty \right \}.\]
\end{definition}

We next show that $K$-quasiregular maps of the sphere are uniformly H\"older continuous.

\begin{lemma}
\label{lem:lip}
If $f:S^n \to S^n$ is $K$-quasiregular and non-constant, then $\operatorname{Lip}_{1/K}(f) < \infty$.
\end{lemma}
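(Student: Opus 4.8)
The plan is to establish the uniform Hölder estimate with exponent $1/K$ by reducing the global statement on $S^n$ to a local statement, where I can invoke the standard local Hölder continuity for quasiregular maps, namely \cite[Theorem III.4.7]{Rickman} already cited in the paragraph preceding Lemma \ref{lem:superattr}. The key point to exploit is that $f:S^n \to S^n$ is quasiregular on a \emph{compact} manifold, so the local index $i(x,f)$ is globally bounded (it is finite at each point and bounded on the compact sphere), and hence the local Hölder constants and radii can be taken uniform.

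First I would work in the chordal metric and use compactness of $S^n$. Around each point $x_0 \in S^n$ one has a local Hölder estimate of the form $\chi(f(x),f(x_0)) \le C_{x_0}\,\chi(x,x_0)^{\alpha}$ valid for $x$ in a chordal ball $B_{\chi}(x_0,\rho_{x_0})$, where the relevant exponent coming from the distortion inequality is $\alpha = (1/K_I(f))^{1/(n-1)}$ for the inner-dilatation version, or more simply $\alpha = 1/K$ after bounding $K_I \le K$ and absorbing the index; since we only claim the exponent $1/K$, it suffices that $\alpha \ge 1/K$, which holds once one tracks the exponent in Rickman's local estimate. The subtlety is that Rickman's theorem is stated in Euclidean coordinates, so I would transfer it to the chordal metric by composing with Möbius charts, exactly as the paper allows in its discussion of quasiregular maps on $S^n$; because $S^n$ is covered by finitely many such charts on which the chordal and Euclidean metrics are bi-Lipschitz comparable, the exponent is preserved and only the multiplicative constant changes.

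Next I would globalize. By compactness, finitely many balls $B_{\chi}(x_i,\rho_{x_i}/2)$ cover $S^n$, giving a uniform radius $\rho>0$ and a uniform constant $C$ so that $\chi(f(x),f(y)) \le C\,\chi(x,y)^{1/K}$ whenever $\chi(x,y)<\rho$. For the complementary case $\chi(x,y)\ge \rho$, I use that the chordal metric is bounded by $1$, so
\[
\chi(f(x),f(y)) \le 1 \le \rho^{-1/K}\,\chi(x,y)^{1/K},
\]
and therefore the global Hölder constant may be taken as $\max\{C,\rho^{-1/K}\}$, proving $\operatorname{Lip}_{1/K}(f)<\infty$.

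The main obstacle I anticipate is bookkeeping the exponent correctly when passing between the inner/outer dilatation forms of the distortion inequality and between Euclidean and chordal coordinates, and in particular verifying that the local index $i(x,f)$ enters the estimate only in the constant and not in a way that degrades the exponent below $1/K$. This is handled by noting that $i(x,f)$ is a positive integer that is locally constant away from the branch set and bounded on the compact sphere, so it contributes a bounded factor; the worst-case exponent is governed purely by $K$, which yields exactly the claimed $1/K$.
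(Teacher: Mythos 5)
There is a genuine gap at your globalization step. Rickman's Theorem III.4.7 is a \emph{centered} estimate: at a given point $x_0$ it produces constants $C_{x_0}$ and $\rho_{x_0}$, depending on $x_0$ (through the local index and through the size of a normal neighbourhood of $x_0$), such that $\chi(f(x),f(x_0)) \leq C_{x_0}\,\chi(x,x_0)^{1/K}$ for $\chi(x,x_0)<\rho_{x_0}$. Covering $S^n$ by finitely many balls $B_{\chi}(x_i,\rho_{x_i}/2)$ only gives uniform control of distances \emph{to the finitely many centres} $x_i$. For a pair $x,y$ lying in one ball with $\chi(x,y)$ tiny but both points at distance comparable to $\rho_{x_i}/2$ from the centre, the centred estimates at $x_i$ plus the triangle inequality yield only $\chi(f(x),f(y)) \leq 2C_{x_i}(\rho_{x_i}/2)^{1/K}$, which is not a bound of the form $C\,\chi(x,y)^{1/K}$. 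To pass from centred estimates to the two-variable estimate you need the constants $C_x$, $\rho_x$ to be uniform over \emph{all} $x \in S^n$, not just over a finite net --- and that uniformity is exactly the content of the lemma. The phrase ``by compactness the local H\"older constants and radii can be taken uniform'' assumes what is to be proved: nothing in Theorem III.4.7 says that $x \mapsto C_x$ is locally bounded; a priori it could degenerate as $x$ approaches the branch set, where the normal neighbourhoods shrink.

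This is precisely the difficulty the paper's proof is built to overcome. It argues by contradiction: take a sequence of bad pairs $x_m,y_m$ (necessarily with $\chi(x_m,y_m)\to 0$), renormalize by chordal isometries sending $x_m$ and $f(x_m)$ to $0$, use Theorem \ref{mini} to extract a locally uniformly convergent subsequence $f_m \to f_0$ with $f_0$ non-constant, and then apply Rickman's centred estimate \emph{once}, at the single point $0$, to the limit $f_0$; local uniform convergence transfers that one estimate back to $f_m$ for large $m$, giving the contradiction. In effect, normality substitutes for the missing uniformity of the local constants. Your outer layer is fine --- the M\"obius charts comparing chordal and Euclidean metrics, the observation that the local index only improves the exponent (since $(i/K_I)^{1/(n-1)} \geq 1/K$), and the treatment of far-apart pairs via $\chi(f(x),f(y)) \leq 1 \leq \rho^{-1/K}\chi(x,y)^{1/K}$ --- and your strategy would succeed if you replaced Theorem III.4.7 by a genuinely two-variable local H\"older estimate, uniform on compact sets (such a statement can be extracted from Morrey-type oscillation inequalities for mappings of bounded distortion, or established by the paper's normal family argument). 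As written, however, the key uniformity is asserted rather than proved.
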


\begin{proof}
Suppose the claimed result is not true. Then we can find sequences $x_m,y_m$ in $S^n$ with 
\[ \frac{ \chi ( f(x_m) , f(y_m) )}{\chi (x_m,y_m)^{1/K} } \to \infty\]
as $m\to \infty$. Since $S^n$ is bounded in the chordal metric, we must have $\chi(x_m,y_m) \to 0$. 
By passing to a subsequence if necessary, we may assume that $x_m$ and $y_m$ converge to some point $x_0 \in S^n$.

Choose M\"obius maps $A_m,B_m$ that are chordal isometries and which map $x_m$ and $f(x_m)$ to $0$ respectively. Denote by $f_m$ the $K$-quasiregular map $B_m \circ f \circ A_m^{-1}$. We may choose $r>0$ small enough so that  on $B_{\chi}(0,r)$, each $f_m$ omits a fixed neighbourhood of infinity. Then by Theorem \ref{mini}, $\{ f_m |_{B_{\chi}(0,r)} \}$ is a normal family. 

By passing to a subsequence and re-labelling, we may assume that $f_m \to f_0$ locally uniformly on $B_{\chi}(0,r)$. The limit $f_0$ is non-constant, since otherwise $f$ would be constant in a neighbourhood of $x_0$. By \cite[Theorem III.4.7]{Rickman}, there exist constants $C,s>0$ so that
\begin{equation}
\label{eq:lip1} 
\chi ( f_0(x), 0) \leq C \chi (x,0)^{1/K},
\end{equation}
for $\chi(x,0 )<s$.
Set $r_1 < \min \{ r/2,s/2 \}$.
Since $f_m \to f_0$ uniformly on $B_{\chi}(0,r_1)$, given $\epsilon >0$, there exists $M\in \N$ so that if $m\geq M$ then we have
\begin{equation}
\label{eq:lip2}
\chi( f_m(x) , f_0(x) ) < \epsilon
\end{equation}
for all $x\in B_{\chi}(0,r_1)$. Then by \eqref{eq:lip1} and \eqref{eq:lip2}, we have for $m\geq M$
\begin{align*}
\chi( f(x_m), f(y_m) ) &= \chi( f_m(A_m(x_m)) , f_m(A_m(y_m)) ) \\
&= \chi ( f_m(0) , f_m(A_m(y_m)) ) \\
&\leq \chi ( f_m(0) , f_0(0) ) + \chi ( f_0(0) , f_0(A_m(y_m)) ) + \chi ( f_0(A_m(y_m)) , f_m( A_m(y_m) ) ) \\
& < 2\epsilon +C\chi(0 , A_m(y_m ) )^{1/K} \\
& = 2\epsilon + C\chi(x_m,y_m)^{1/K}.
\end{align*}
Since we can make $\epsilon$ as small as we like by choosing $m$ large enough, we obtain a contradiction.
\end{proof}

We now prove the main result of this section.

\begin{theorem}
\label{thm:unifperf}
Suppose $G$ is a $K$-quasiregular semigroup generated by $\{ g_i : i \in I\}$ and there is a constant $C>0$ so that $\operatorname{Lip}_{1/K}(g_i) \leq C$ for all $i \in I$. Then $J(G)$ is uniformly perfect.
\end{theorem}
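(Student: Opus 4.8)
The plan is to argue by contradiction using the round-annulus criterion of Lemma \ref{lem:chordal}. Suppose $J(G)$ were not uniformly perfect. Then there is a sequence of chordal annuli $A_m = A_{\chi}(x_m, r_m, R_m)$ separating $J(G)$, with centres $x_m \in J(G)$ and $\mod A_m = \log(R_m/r_m) \to \infty$. Writing $D_m^0$ for the bounded and $D_m^1$ for the unbounded complementary component, each meets $J(G)$; I would fix $a_m \in D_m^0 \cap J(G)$ and $b_m \in D_m^1 \cap J(G)$. The aim is to show that a gap this thick is incompatible with the uniform H\"older control on the generators, the bridge between the two being the conformal modulus.

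The first ingredient is a uniform modulus-distortion estimate. For any non-constant $K$-quasiregular $g$ and any ring domain the standard modulus inequality distorts $\mod$ by a factor controlled by $n$ and $K$ (of the shape $K^{1/(n-1)}$) when $g$ restricts to a homeomorphism of the ring. What the hypothesis $\operatorname{Lip}_{1/K}(g_i) \le C$ adds is geometric control in the chordal metric: since $g_i(B_{\chi}(x,s)) \subseteq B_{\chi}(g_i(x), Cs^{1/K})$, the image of a small chordal ball has controlled diameter, so that when a generator is applied to the thick annulus $A_m$ the resulting ring is again an honest separating annulus whose modulus is comparable to $\mod A_m$ and whose location is controlled. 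I would package this as a single statement: there is $\kappa' = \kappa'(n,K,C)$ so that applying any one generator distorts the modulus of the separating rings in play by a factor at most $\kappa'$, with no dependence on $m$.

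Next I would reduce the separation of $J(G)$ to the separation of the Julia set of a single element of $G$. By Proposition \ref{prop:8} and backward invariance (Proposition \ref{prop:3}), the points $a_m, b_m$ are approximated by points of $\bigcup_{g} J(g)$; and using Lemma \ref{lem:prop8}, namely that $J(h^{k}g)$ accumulates on $g^{-1}(J(h))$, I would produce a single map $w_m = h^{k}g \in G$ whose Julia set $J(w_m)$ has points in both $D_m^0$ and $D_m^1$. Thus a slightly shrunk annulus $A_m'$, still of modulus tending to infinity, separates the single, completely invariant set $J(w_m)$. Applied to $J(w_m)$, the blow-up property (Lemma \ref{lem:blowup}) together with complete invariance forces the gap $A_m'$ to be filled after finitely many iterates of $w_m$; tracking the conformal modulus through this blow-up with the uniform factor $\kappa'$ from the previous step should then contradict $\mod A_m' \to \infty$.

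The main obstacle is precisely the uniformity in these last steps. A general element $w \in G$ is a word of unbounded length in the generators, so the crude distortion bound coming from $K$-quasiregularity alone accumulates a factor that grows with the word length (equivalently, with the number of iterates needed in the blow-up) and is useless on its own. The whole point of the uniform H\"older hypothesis is to defeat this accumulation: it must be leveraged to bound the number of generator-applications actually needed to traverse a gap of given modulus, so that the total modulus distortion stays bounded by a constant depending only on $n$, $K$ and $C$. Making this counting quantitative, and verifying that the pulled-back and pushed-forward rings remain genuine separating annuli rather than degenerating, is where the real work lies; the remainder is bookkeeping with the modulus inequality and the results already in place in Sections 3 and 4.
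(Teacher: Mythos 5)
There is a genuine gap, and you have in fact located it yourself: your plan hinges on bounding, uniformly in $m$, the number of generator applications needed to expand the small complementary component of $A_m$ across the gap, and no such bound exists. By Lemma \ref{lem:blowup} the number of iterates required to blow up a set to definite size depends on the set; as $\mod A_m \to \infty$ the component $E_m$ shrinks, and the word length needed grows without bound. Consequently any per-letter distortion factor $\kappa'(n,K,C)$ accumulates as $\kappa'^{N_m}$ with $N_m \to \infty$, and the proposed bookkeeping cannot produce a contradiction with $\mod A_m \to \infty$. There are also secondary problems with the per-letter estimate itself: the generators are non-injective, so the image of a separating ring under $g_i$ need not be a ring domain, and the modulus inequalities for quasiregular maps involve the local degree, not just $K$; the hypothesis $\operatorname{Lip}_{1/K}(g_i)\leq C$ controls diameters, not moduli. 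Your proposal never converts the H\"older hypothesis into the uniformity it promises, so the argument does not close.

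The paper's proof avoids word-length accumulation entirely, and the mechanism is worth internalizing because it is the key idea (due to Stankewitz in the rational setting). One fixes $q+1$ points of $J(G)$ that are pairwise more than $C\delta^{1/K}$ apart, with $\delta$ less than half the diameter of $J(G)$. Given the shrinking components $E_m$ (of diameter $<\delta$), one takes $f_m \in G$ of \emph{minimal word length} with $\diam(f_m(E_m)) > \delta$; such $f_m$ exists by Lemma \ref{lem:blowup} applied to a single $h \in G$ with $J(h) \cap E_m \neq \emptyset$ (after excising a small neighbourhood $U$ of the exceptional points of $h$). Writing $f_m = g_{i_1}\circ F_m$, minimality gives $\diam(F_m(E_m)) \leq \delta$, and then a \emph{single} application of the uniform H\"older bound gives
\begin{equation*}
\delta < \diam(f_m(E_m)) \leq C\,\diam(F_m(E_m))^{1/K} \leq C\delta^{1/K},
\end{equation*}
so the distortion constant is used exactly once, at the last letter, and never accumulates. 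The contradiction then comes not from modulus bookkeeping but from normality: precomposing with M\"obius maps $\psi_m:\B^n \to A_m \cup E_m$, the maps $h_m = f_m\circ \psi_m$ send the image of the annulus into $F(G)$ (forward invariance) and send $V_m = \psi_m^{-1}(E_m)$ onto a set of diameter at most $C\delta^{1/K}$, which can capture at most one of the $q+1$ separated points of $J(G)$; hence each $h_m(\B^n)$ omits at least $q$ points, the family $\{h_m\}$ is normal by Theorem \ref{mini}, and equicontinuity forces $\diam(h_m(V_m)) \to 0$ because $\diam(V_m)\to 0$, contradicting $\diam(h_m(V_m)) > \delta$. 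If you want to salvage your outline, you should replace the modulus-tracking and iterate-counting by this stopping-time argument plus a normal-family contradiction; your reduction via Lemma \ref{lem:prop8} to a single $w_m \in G$ is then unnecessary.
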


\begin{proof}
Suppose the result is false and $J(G)$ is not uniformly perfect.
Fix $q$ distinct points in $J(G)$, where $q = q(n,K)$ is Rickman's constant, and choose $\delta >0$ small enough so that any two of these points are at a chordal distance greater than $C\delta^{1/K}$ from each other and $\delta < d/2$, where $d$ denotes the chordal diameter of $J(G)$.

Since $J(G)$ is assumed to not be uniformly perfect, by Lemma \ref{lem:chordal} there is a sequence of round ring domains $A_m$ centred at $x_m \in J(G)$, which lie in $F(G)$, separate $J(G)$ and with $\mod(A_m) \to \infty$. If $E_m$ is the component of $S^n \setminus A_m$ with smaller chordal diameter, then $\diam(E_m) \to 0$. We may assume that $\diam(E_m) < \delta$ for all $m$.

Since $E_m$ contains elements of $J(G)$, then by Propsition \ref{prop:8}, there exists $h \in G$ so that $E_m \cap J(h) \neq \emptyset$. If $\mathcal{E}(h) \cap J(G) = \emptyset$, let $U = \emptyset$. Otherwise, let $U$ be the union of at most $q-1$ open chordal disks centred at points of $\mathcal{E}(h) \cap J(G)$, each of diameter at most $\frac{d}{2(q-1)}$ so that the boundary of each disk in $U$ contains a point of $J(G)$. We can find such disks since $J(G)$ is perfect by Proposition \ref{prop:7}. 

Let $J' = J(G) \setminus U$. Then $\diam (J') \geq \frac{d}{2} > \delta$.
Moreover,  by Lemma \ref{lem:blowup}, $J' \subset h^k(E_m)$ for some integer $k$. In particular, $\diam (h^k(E_m)) > \delta$. Let $f_m \in G$ be an element of minimal word length (in terms of the generators of $G$) so that $\diam ( f_m(E_m)) > \delta$. We can write $f_m = g_{i_1} \circ \ldots g_{i_j}$, where the indices depend on $m$. Set $F_m = g_{i_2} \circ \ldots \circ g_{i_j}$, or set $F_m$ equal to the identity if $f_m$ is one of the generators. Then by the minimality of the choice of $f_m$, we have $\diam( F_m(E_m)) \leq \delta$. Hence
\begin{align*}
\diam(f_m(E_m)) & = \diam ( g_{i_1}(F_m(E_m))) \\
&\leq C\diam(F_m(E_m))^{1/K}\\
&\leq C\delta^{1/K}.
\end{align*}

Since $A_k \cup E_k$ is a ball, we can find M\"obius maps $\psi_k : \B^n \to A_k \cup E_k$. Then $\psi_k(0) \in J(G)$. Let $V_k = \psi_k^{-1}(E_k)$ and observe that since $\mod(\B^n \setminus V_k)  = \mod A_k$, we have $\diam( V_k) \to 0$.

Let $h_m = f_m \circ \psi_m$. Then $h_m( \B^n \setminus V_k) \subset F(G)$ and 
\[ \diam ( h_m(V_m)) = \diam ( f_m(E_m)) \leq C\delta^{1/K}. \]
Hence each $h_m(\B^n)$ omits at least $q$ of the points $x_1,\ldots, x_{q+1}$ and so by Theorem \ref{mini}, the family $\{ h_m \}$ is normal in $\B^n$. By equicontinuity of this family, we have $\diam(h_m(V_m)) \to 0$ since $\diam(V_m) \to 0$ and $0 \in V_m$ for all $m$. This contradicts $\diam(h_m(V_m))  = \diam (f_m(E_m)) > \delta$.
\end{proof}

\begin{corollary}
\label{cor:1}
If $G$ is a finitely generated quasiregular semigroup, then $J(G)$ is uniformly perfect.
\end{corollary}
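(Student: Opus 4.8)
The plan is to observe that this is an immediate consequence of Theorem \ref{thm:unifperf} combined with Lemma \ref{lem:lip}, so the entire task reduces to verifying that the finitely generated hypothesis forces the uniform H\"older condition appearing in the theorem. First I would write $G = \left< g_1,\ldots, g_m \right>$ and recall that, by the definition of a $K$-quasiregular semigroup, each generator $g_i$ is a $K$-quasiregular map $S^n \to S^n$. Crucially, each $g_i$ is non-injective, hence in particular non-constant, so the hypotheses of Lemma \ref{lem:lip} are met for every generator.

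Next I would apply Lemma \ref{lem:lip} to each $g_i$ in turn. This yields $\operatorname{Lip}_{1/K}(g_i) < \infty$ for every $i = 1, \ldots, m$. Since there are only finitely many generators, I can set
\[
C = \max_{1 \leq i \leq m} \operatorname{Lip}_{1/K}(g_i),
\]
which is finite, being the maximum of finitely many finite quantities. This is precisely the uniform bound $\operatorname{Lip}_{1/K}(g_i) \leq C$ required by the statement of Theorem \ref{thm:unifperf}.

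With this $C$ in hand, the hypotheses of Theorem \ref{thm:unifperf} are satisfied, and I would conclude directly that $J(G)$ is uniformly perfect. I do not anticipate any genuine obstacle here: the only point that warrants a moment's care is confirming that the generators are non-constant so that Lemma \ref{lem:lip} is applicable, and this is guaranteed by the non-injectivity built into the definition of a quasiregular semigroup. The heart of the matter—passing from a uniform H\"older bound on the generators to uniform perfectness of the Julia set—has already been carried out in Theorem \ref{thm:unifperf}, and the corollary simply records that finite generation is one natural way to supply such a bound.
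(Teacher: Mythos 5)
Your proposal is correct and follows exactly the paper's own route: the paper's proof is the one-line observation that the corollary ``follows immediately from Lemma \ref{lem:lip} and Theorem \ref{thm:unifperf},'' and your write-up simply makes explicit the details (non-injectivity implies non-constancy, and taking the maximum of finitely many finite H\"older constants) that the paper leaves implicit.
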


\begin{proof}
This follows immediately from Lemma \ref{lem:lip} and Theorem \ref{thm:unifperf}.
\end{proof}

In \cite{Stank}, it was shown that for a rational semigroup $G$ whose generators are all uniformly Lipschitz, $J(G)$ is uniformly perfect. Moreover, in \cite{HMM} it was shown that near a fixed point of a uqr mapping $f$, where $f$ is locally injective, $f$ is in fact locally bi-Lipschitz. It is therefore worth pointing out that uqr mappings need not be locally bi-Lipschitz away from branch points.
We will give a simple example to illustrate this.

\begin{example}
Let $f(z) = z^2$ be define on the unit disk $\D$ and let $U \subset \D \setminus \{ 0 \}$ be a domain on which $f$ is injective and so that $O^+(U) \cap U = \emptyset$. Let $z_0 \in U$ and choose a small neighbourhood $V$ of $z_0$ with $\overline{V} \subset U$ 
so that we can define on $V$
\[ g(z) = f(z_0) + (z-z_0)|z-z_0|^{\beta},\]
for $\beta \in (-1,0)$. We moreover require $\overline{g(V)} \subset f(U)$. We then replace $f$ on $V$ by $g$ and interpolate on $U \setminus \overline{V}$ via Sullivan's Annulus Theorem (see \cite{TV}) to obtain a map $f_1:\D \to \D$. By construction, $f_1$ is uqr, but $\operatorname{Lip}_1(f_1)$ is not finite due to the behaviour of $f_1$ at $z_0$.
\end{example}

\section{Examples}

In this section, we will exhibit several classes of examples of quasiregular semigroups. However, we start by noting that we have to be careful when generating quasiregular semigroups. Throughout, we have made the assumption that the degree of each element of a quasiregular semigroup is at least two. Unlike with rational semigroups, we cannot just include a M\"obius map in the generating set without taking care.

\begin{proposition}
\label{prop:uqr}
Let $f:S^n \to S^n$ be $K$-uqr with $K>1$. Suppose there exists $x_0 \in S^n$ with $f'(x_0) \neq \lambda \mathcal{O}$ for some $\lambda >0$ and orthogonal matrix $\mathcal{O}$. Then there exists a M\"obius map $A$ so that $A\circ f$ is not uqr. Consequently $\left < f,A \right >$ is not a quasiregular semigroup.
\end{proposition}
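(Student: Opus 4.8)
The plan is to produce a single Möbius map $A$ for which the composition $g = A \circ f$ has a fixed point whose derivative is a non-conformal symmetric positive-definite linear map; iterating then forces the linear distortion of $g^m$ at that point to blow up, contradicting the uniform bound that Theorem~\ref{thm:linear} would impose were $g$ uqr. Set $y_0 = f(x_0)$ and $B = f'(x_0)$, which we may take to be invertible (the distortion inequality rules out a nonzero singular derivative at a point of differentiability, and a zero derivative is excluded by the amplification argument below). The hypothesis $B \neq \lambda \mathcal{O}$ says exactly that $B$ is not a conformal linear map, i.e. its singular values are not all equal. Writing the polar decomposition $B = R S$ with $R$ orthogonal and $S$ symmetric positive-definite, the matrix $S$ is not a scalar multiple of the identity, so it has a largest and a smallest eigenvalue with $\sigma_{\max} > \sigma_{\min} > 0$.

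First I would choose $A$. Since the Möbius group of $S^n$ acts transitively and the isotropy subgroup of any point realizes every conformal linear map $\mu \mathcal{P}$ (with $\mu > 0$ and $\mathcal{P}$ orthogonal) on the tangent space, I can select a Möbius map $A$ with $A(y_0) = x_0$ and $A'(y_0) = R^{T}$. Möbius maps are conformal and smooth, so by the chain rule $g = A \circ f$ is differentiable at $x_0$, fixes $x_0$, and satisfies
\[ g'(x_0) = A'(y_0)\, f'(x_0) = R^{T} R S = S. \]
Because $x_0$ is a fixed point, repeated application of the chain rule shows $g^m$ is differentiable at $x_0$ with $(g^m)'(x_0) = S^m$; since $S^m$ is invertible, $g^m$ is locally injective at $x_0$, so $i(x_0, g^m) = 1$.

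The amplification step is then immediate. As $S$ is symmetric positive-definite, its singular values coincide with its eigenvalues, so the largest and smallest singular values of $S^m$ are $\sigma_{\max}^m$ and $\sigma_{\min}^m$. Reading the infinitesimal behaviour of $g^m$ at the fixed point off its derivative gives $L(x_0,r)/r \to \sigma_{\max}^m$ and $l(x_0,r)/r \to \sigma_{\min}^m$ as $r \to 0$, whence
\[ H(x_0, g^m) = \left( \frac{\sigma_{\max}}{\sigma_{\min}} \right)^{m} \longrightarrow \infty. \]
Finally I would derive the contradiction: if $g$ were uqr, then $K_O(g^m)$ would be bounded by some $K'$ independent of $m$, and together with $i(x_0, g^m) = 1$, Theorem~\ref{thm:linear}, applied in a Möbius chart about the fixed point (which changes neither $H$, the local index, nor the dilatation), would yield $H(x_0, g^m) \leq C$ for a constant $C$ independent of $m$. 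This contradicts the divergence above, so $A \circ f$ is not uqr; since $A \circ f \in \langle f, A \rangle$ and every element of a quasiregular semigroup must be uqr, $\langle f, A \rangle$ is not a quasiregular semigroup. I expect the main obstacle to be the construction of $A$: one must verify that the conformal group of $S^n$ is rich enough to prescribe simultaneously the image point $x_0$ and the full conformal part $R^{T}$ of the derivative, and confirm that passage to a local Euclidean chart leaves the quantities in Theorem~\ref{thm:linear} unchanged so that it applies verbatim.
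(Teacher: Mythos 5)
Your proof is correct in substance and shares the paper's skeleton --- make $x_0$ a fixed point of $g = A\circ f$ and let iteration amplify the non-conformal derivative --- but the execution is genuinely different and, in fact, more robust. The paper takes $A$ to be a plain translation sending $f(x_0)$ to $x_0$, so that $[(A\circ f)^m]'(x_0) = [f'(x_0)]^m$, and then cites Martin's theorem that uniformly quasiconformal linear maps are conformal. Your rotational correction $A'(y_0) = R^{T}$ from the polar decomposition $f'(x_0) = RS$ replaces that citation by elementary linear algebra: the derivative at the fixed point becomes the symmetric positive-definite matrix $S$, whose powers visibly have condition number $(\sigma_{\max}/\sigma_{\min})^m \to \infty$, and Theorem \ref{thm:linear}, applied with $i(x_0,g^m) = 1$ and $K_O(g^m)$ uniformly bounded, converts this into a contradiction. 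This buys two things. First, a general non-conformal matrix $B$ can have uniformly bounded powers (e.g.\ $B = P\mathcal{O}P^{-1}$ with $\mathcal{O}$ orthogonal and $P$ non-conformal gives $K(B^m) \leq K(P)^2$ for all $m$), so iterating $f'(x_0)$ itself, as the paper does, really requires ``not linearly conjugate to a conformal matrix''; your symmetrization makes the blow-up genuine under the stated hypothesis that $f'(x_0)$ is merely not of the form $\lambda\mathcal{O}$. Second, you correctly route the pointwise information through the linear distortion theorem: the analytic dilatation inequality holds only almost everywhere, so behaviour of the derivative at the single point $x_0$ must be turned into a contradiction via $H(x_0,g^m)$, a point the paper glosses over. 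Your worry about constructing $A$ is unfounded: after a Möbius change of coordinates putting $x_0, y_0$ in $\R^n$, the Euclidean isometry $A(x) = x_0 + R^{T}(x - y_0)$ does the job.

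Two quibbles, neither fatal. The dismissal of a vanishing derivative (``excluded by the amplification argument below'') is not an argument: if $f'(x_0) = 0$ then $g'(x_0) = 0$ and no blow-up occurs. The hypothesis as literally stated does not exclude this case (the zero matrix is not $\lambda\mathcal{O}$ for any $\lambda > 0$), so, like the paper, you are implicitly reading it as saying $f'(x_0)$ exists, is invertible, and is not conformal; say so explicitly. (Your parenthetical claim that a nonzero \emph{singular} derivative is impossible is correct, but the reason is Theorem \ref{thm:linear} applied to $f$ --- such a derivative would force $H(x_0,f) = \infty$ --- not the almost-everywhere distortion inequality.) Similarly, ``$S^m$ invertible, hence $g^m$ locally injective'' is not an inverse-function-theorem situation, since $g^m$ is only known to be differentiable at the one point; the correct justification of $i(x_0,g^m) = 1$ is that a sense-preserving, discrete, open map which is differentiable at $x_0$ with invertible derivative has local topological degree $+1$ there. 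Both issues are matters of wording and do not affect the structure of the argument.
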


We will use the fact that the only uniformly quasiconformal linear maps are of the form $\lambda \mathcal{O}$ for $\lambda >0$ and $\mathcal{O}$ orthogonal, see \cite{Martin}.

\begin{proof}
We may assume $x_0, f(x_0) \in \R^n$, otherwise conjugate by an appropriate M\"obius map. The point $x_0$ cannot be a fixed point of $f$ because then the maximal dilatation of $[f'(x_0) ]^m$ diverges as $m\to \infty$, see \cite{Martin}.
Let $A$ be a translation sending $f(x_0)$ to $x_0$. Then $A\circ f$ has $x_0$ as a fixed point, and $ [ (A\circ f)^m ] '(x_0) = [ f'(x_0) ]^m$. Again we apply \cite{Martin} to conclude that $A\circ f$ is not uqr.
\end{proof}

\subsection{Solutions to the Schr\"oder equation}

We start this section with the following definition, see \cite{IMbook} and for a more general definition see \cite{FM}.

\begin{definition}
A quasiregular mapping $h:\R^n \to \R^n$ is called {\it strongly automorphic} if there exists a discrete group of isometries $G$ so that the following two conditions hold:
\begin{enumerate}[(i)]
\item $h\circ g = h$ for all $g\in G$,
\item $G$ acts transitively on the fibres $h^{-1}(y)$, that is, if $h(x_1) = h(x_2)$, then there exists $g\in G$ such that $x_2=g(x_1)$.
\end{enumerate}
\end{definition}

Then $G$ has a maximal translation subgroup $\mathcal{T}$. There are three classes (see \cite{FM} for more details):
\begin{enumerate}[(i)]
\item $\wp$-type, where $\mathcal{T}$ is isomorphic to $\Z^n$,
\item sine-type, where $\mathcal{T}$ is isomorphic to $\Z^{n-1}$ and there is a rotation identifying prime ends of the beam $\R^n / \mathcal{T}$,
\item Zorich-type, where $\mathcal{T}$ is isomorphic to $\Z^{n-1}$ and there is no such rotation.
\end{enumerate}

If $A = \lambda \mathcal{O}$, for $\lambda >1$ and $\mathcal{O}$ orthogonal, satisyfing $AGA^{-1} \subset G$, then the Schr\"oder equation
\begin{equation}
\label{eq:sch} 
f\circ h = h\circ A
\end{equation}
has a unique solution $f$ that is a uqr map that extends to a uqr map $f:S^n \to S^n$, see \cite{FM,IMbook}.
Again there are three possibilities:
\begin{enumerate}[(i)]
\item $f$ is of Latt\'es-type if $h$ is of $\wp$-type,
\item $f$ is of Chebyshev-type if $h$ is of sine-type,
\item $f$ is of power-type if $h$ is of Zorich-type.
\end{enumerate}

Mayer \cite{Mayer1,Mayer2} waas the first to construct example of power-type and Chebyshev-type  uqr mappings.
We remark that each of these three cases can occur in every dimension at least two. Now, fix a Zorich-type map $h$ and assume that the translation part of $G$ is generated by $e_1,\ldots,e_{n-1}$, where $e_j$ is the standard $j$'th unit vector in $\R^n$.
We will also assume that $h$ maps the hyperplane $\{x_n = r \}$ onto the sphere $\{ |x| = e^r \}$ as in the standard constuction of a Zorich-type map (see \cite{FM} and references therein).

Now, any $g\in G$ has the form
\[ g(x_1,\ldots,x_{n-1}, x_n) = ( x_1+m_1,\ldots, x_{n-1}+m_{n-1}, x_n )\]
for some integers $m_1,\ldots, m_{n-1}$. For $d\in \Z$ and $\lambda >0$, define
\[A_{d,\lambda}(x_1,\ldots, x_{n-1},x_n) = ( dx_1, \ldots, dx_{n-1}, dx_n  + \ln \lambda).\]
A calculation shows that
\[ A_{d,\lambda}gA_{d,\lambda}^{-1}(x_1,\ldots, x_{n-1},x_n) = (x_1 + dm_1 ,\ldots, x_{n-1} + dm_{n-1}, x_n)\]
and hence $A_{d,\lambda}gA_{d,\lambda}^{-1} \in G$. Denote by $f_{d,\lambda}$ the unique uqr solution to the Schr\"oder equation.

We observe that in dimension two, if $h$ is the usual exponential function, then $f_{d,\lambda}(z) = \lambda z^d$. We will denote by $\mathcal{F}$ the family
\[ \mathcal{F} = \{ f_{d,\lambda} : d\in \Z, \lambda >0 \}.\]
Now, if we have two maps $f_1,f_2 \in \mathcal{F}$, then they arise from two linear maps $A_{d_1,\lambda_1}$ and $A_{d_2,\lambda_2}$ and solving the Schr\"oder equation. Hence
\[ ( f_1 \circ f_2) \circ h = h \circ ( A_{d_1,\lambda_1} \circ A_{d_2,\lambda_2} ).\]
Since 
\[  A_{d_1,\lambda_1} \circ A_{d_2,\lambda_2} = A_{d_1d_2, \lambda_1\lambda_2^{d_1}},\]
it follows that $f_1\circ f_2 \in \mathcal{F}$ and hence $\mathcal{F}$ is closed under composition. Moreover, since each $A_{d,\lambda}$ is conformal, if $h$ is $K$-quasiregular, then each element of $\mathcal{F}$ is $K^2$-quasiregular. We have shown the following result.

\begin{theorem}
\label{thm:powertype}
If $G = \left < \{  f_{d_i, \lambda_i} \} _{i\in I} \right >$, then $G$ is a quasiregular semigroup.
\end{theorem}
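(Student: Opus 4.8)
The plan is to verify the three defining conditions of a $K$-quasiregular semigroup for $G$ against the two facts already assembled in the discussion above: that the family $\mathcal{F}$ is closed under composition, and that every member of $\mathcal{F}$ is $K^2$-quasiregular whenever $h$ is $K$-quasiregular. The guiding observation is that $G$, being the semigroup generated by a subset of $\mathcal{F}$, is contained in $\mathcal{F}$; this delivers closure and a uniform dilatation bound essentially for free, leaving only non-injectivity to be argued.

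First I would record the inclusion $G \subseteq \mathcal{F}$: every element of $G$ is a finite word $f_{d_{i_1},\lambda_{i_1}} \circ \cdots \circ f_{d_{i_k},\lambda_{i_k}}$ in the generators, and since $\mathcal{F}$ is closed under composition each such word again lies in $\mathcal{F}$. In particular $G$ is closed under composition, and every $g \in G$ is $K^2$-quasiregular. Since the iterates $g^m$ likewise lie in $G \subseteq \mathcal{F}$, the bound $K(g^m) \le K^2$ holds for all $m$, so each element is even uniformly quasiregular with the single constant $K^2$. This settles two of the three required conditions at once.

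It remains to show that each $g \in G$ is non-injective, and this is the only place the degree of the generators is used. Under the standing convention of this section that every element of a quasiregular semigroup has degree at least two, the generators satisfy $|d_i| \ge 2$, so each $f_{d_i,\lambda_i}$ is itself non-injective. Since a composition having even a single non-injective factor is non-injective, and every element of $G$ is a nonempty word in the generators, every $g \in G$ is non-injective. Equivalently, using the composition law $A_{d_1,\lambda_1} \circ A_{d_2,\lambda_2} = A_{d_1 d_2,\lambda_1 \lambda_2^{d_1}}$ together with uniqueness of the Schr\"oder solution, the word above equals $f_{d,\lambda}$ with $d = d_{i_1}\cdots d_{i_k}$, so $|d| \ge 2^k \ge 2$ and $g$ is a branched cover of degree at least two. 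Assembling these observations, $G$ meets all three requirements and is therefore a $K^2$-quasiregular semigroup.

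No hard estimate is involved here: the analytic content, namely the uniform $K^2$ bound obtained by conjugating the conformal maps $A_{d,\lambda}$ through $h$ and the closure of $\mathcal{F}$, has already been carried out. The one point meriting care is that non-injectivity genuinely requires $|d_i| \ge 2$ on the generators, since $f_{1,\lambda}$ is conformal and hence injective (in the model case $h = \exp$ it is the similarity $z \mapsto \lambda z$); the degree-at-least-two convention is exactly what keeps the statement correct rather than false.
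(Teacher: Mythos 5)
Your proposal is correct and takes essentially the same route as the paper: the paper's proof \emph{is} the discussion preceding the statement (closure of $\mathcal{F}$ under composition via $A_{d_1,\lambda_1}\circ A_{d_2,\lambda_2} = A_{d_1d_2,\lambda_1\lambda_2^{d_1}}$ and uniqueness of the Schr\"oder solution, plus the uniform $K^2$ bound from conformality of the $A_{d,\lambda}$), which is exactly what you invoke together with the observation $G\subseteq\mathcal{F}$. Your explicit handling of non-injectivity via $|d_i|\geq 2$ is a point the paper leaves to its standing degree-at-least-two convention, and is a sensible clarification rather than a different argument.
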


One can make similar constructions of quasiregular semigroups using sine-type maps and $\wp$-type maps by using solutions of the Schr\"oder equation with $A_d(x) = dx$. We, however, will restrict to the case above and give some results on the types of Julia sets that can arise from such semigroups.

\begin{lemma}
\label{lem:juliasphere}
If $d\in \Z$ and $\lambda >0$, then $J(f_{d,\lambda}) = S(\lambda^{1 / (1-d)})$, where $S(r)$ denotes the Euclidean sphere centred at $0$ of radius $r>0$.
\end{lemma}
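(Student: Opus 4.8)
The plan is to reduce the whole statement to a single radial formula for the iterates of $f_{d,\lambda}$ and then run a standard two-basin normality argument. Throughout I would assume $|d|\geq 2$, so that $f_{d,\lambda}$ has degree at least two and is non-injective, as required for a quasiregular semigroup. First I would iterate the Schr\"oder equation to get $f_{d,\lambda}^m\circ h = h\circ A_{d,\lambda}^m$ for all $m\geq 1$. The key observation is that the last coordinate of $A_{d,\lambda}$ evolves independently of the others: one computes $(A_{d,\lambda}^m x)_n = d^m(x_n-x_n^*)+x_n^*$, where $x_n^*=\frac{\ln\lambda}{1-d}$ is the fixed point of the affine map $t\mapsto dt+\ln\lambda$. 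Since $h$ maps $\{x_n=r\}$ onto $\{|x|=e^r\}$, we have $|h(x)|=e^{x_n}$, and writing $z=h(x)$ with $x_n=\ln|z|$ yields
\[ |f_{d,\lambda}^m(z)| = r_0\left(\frac{|z|}{r_0}\right)^{d^m},\qquad r_0 := e^{x_n^*} = \lambda^{1/(1-d)}, \]
for every $z\notin\{0,\infty\}$. (As a check, in dimension two with $h=\exp$ and $f_{d,\lambda}(z)=\lambda z^d$ this is exactly $\lambda|z|^d$.) This identifies $S(r_0)$ as the $h$-image of the invariant hyperplane $\{x_n=x_n^*\}$ and reduces all the radial dynamics to the scalar map $t\mapsto t^{d^m}$.

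From this formula $|f_{d,\lambda}(z)|=r_0$ holds precisely when $|z|=r_0$, so $S(r_0)$ is completely invariant: $f_{d,\lambda}^{-1}(S(r_0))=S(r_0)$. Hence the two open regions $\Omega_-=\{0\leq|z|<r_0\}$ and $\Omega_+=\{|z|>r_0\}$ satisfy $f_{d,\lambda}^m(\Omega_\pm)\cap S(r_0)=\emptyset$ for every $m$. Thus on either region the family $\{f_{d,\lambda}^m\}$ omits the infinitely many points of $S(r_0)$; choosing $q$ of them and applying Theorem \ref{mini} shows $\{f_{d,\lambda}^m\}$ is normal on $\Omega_-$ and on $\Omega_+$. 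This gives $\Omega_-\cup\Omega_+\subseteq F(f_{d,\lambda})$, hence the inclusion $J(f_{d,\lambda})\subseteq S(r_0)$.

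For the reverse inclusion I would pass to the even iterates, for which $d^{2m}\to+\infty$ regardless of the sign of $d$. The radial formula then gives $f_{d,\lambda}^{2m}\to 0$ locally uniformly on $\Omega_-$ and $f_{d,\lambda}^{2m}\to\infty$ locally uniformly on $\Omega_+$ (on a compact subset of $\Omega_-$ one has $|z|/r_0\leq 1-\epsilon$, so $(|z|/r_0)^{d^{2m}}\to 0$ uniformly, and symmetrically on $\Omega_+$). Now fix $y\in S(r_0)$ and any ball $U\ni y$, so that $U\cap\Omega_-$ and $U\cap\Omega_+$ are both nonempty. If $\{f_{d,\lambda}^m\}$ were normal on $U$, some subsequence of $\{f_{d,\lambda}^{2m}\}$ would converge locally uniformly to a continuous $\varphi:U\to S^n$; but then $\varphi\equiv 0$ on $U\cap\Omega_-$ and $\varphi\equiv\infty$ on $U\cap\Omega_+$, and approaching $y$ from either side forces $\varphi(y)=0$ and $\varphi(y)=\infty$ at once, a contradiction. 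Hence $y\in J(f_{d,\lambda})$, so $S(r_0)\subseteq J(f_{d,\lambda})$ and we get equality.

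The main obstacle is really the first step: establishing that the modulus of the iterates depends only on $|z|$ and extracting the clean exponent $d^m$. This rests entirely on the independence of the $x_n$-dynamics of $A_{d,\lambda}$ from the remaining coordinates together with the normalization $|h(x)|=e^{x_n}$ of the Zorich map. Once this radial reduction is in place, the two-basin normality argument is routine, and the sign of $d$ is handled uniformly simply by working with the even iterates.
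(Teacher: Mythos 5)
Your proof is correct, and it shares the paper's point of departure---the Schr\"oder equation together with the normalization $h(\{x_n=r\})=S(e^r)$, which gives that $f_{d,\lambda}$ maps $S(r)$ onto $S(\lambda r^d)$---but from there the two arguments genuinely diverge. The paper's proof is a three-line sketch: it cites \cite{FM} for the fact that $0$ and $\infty$ are superattracting fixed points and then simply asserts that the Julia set is the invariant sphere of radius $t$ with $t=\lambda t^d$. You instead make everything self-contained: you iterate the Schr\"oder equation to get the explicit radial formula $|f_{d,\lambda}^m(z)|=r_0(|z|/r_0)^{d^m}$ with $r_0=\lambda^{1/(1-d)}$, deduce complete invariance of $S(r_0)$, prove $J(f_{d,\lambda})\subseteq S(r_0)$ by applying Theorem \ref{mini} on the two complementary regions, and prove the reverse inclusion by the two-basin divergence argument along even iterates. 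What your route buys is twofold. First, it avoids any appeal to the local fixed-point theory of \cite{HMM} and the dynamical description of power-type maps in \cite{FM}. Second, and more substantively, your even-iterate device handles $d\leq -2$ correctly, a case where the paper's stated justification is actually inaccurate: for negative $d$ the map swaps the inside and outside of $S(r_0)$, so $0$ and $\infty$ form a superattracting $2$-cycle rather than fixed points, and the paper's one-basin picture implicitly assumes $d\geq 2$. Your standing assumption $|d|\geq 2$ is the right reading of the lemma (the maps must be non-injective to generate a quasiregular semigroup, and the exponent $1/(1-d)$ is meaningless for $d=1$), so it costs nothing.
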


\begin{proof}
Since $h$ maps the hyperplane $\{x_n = r\}$ onto $S(e^r)$, it follows from the Schr\"oder equation that $f_{d,\lambda}$ maps $S(r)$ onto $S(\lambda r^d)$. Since $0,\infty$ are superattracting fixed points for $f_{d,\lambda}$ (see  \cite{FM}), the Julia set of $f_{d,\lambda}$ arises as the sphere with radius $t$, where $t = \lambda t^d$. The result follows.
\end{proof}

Our first example is the higher dimensional analogue of \cite[Example 1]{HM}, which in particular shows that the Julia set of a quasiregular semigroup can have interior points without being all of $S^n$, in contrast to the case of uqr mappings.

\begin{proposition}
For every $a>1$, there exists a quasiregular semigroup $G$ in $\R^n$, $n\geq 2$, so that $J(G)$ is the closed ring $\{x:1\leq|x|\leq a\}$.
\end{proposition}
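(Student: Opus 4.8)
The plan is to realize the solid ring as the closure of a dense family of invariant spheres, exploiting the explicit radial action of the power-type maps together with Proposition \ref{prop:8} and Lemma \ref{lem:juliasphere}. Set $L = \ln a > 0$ and take the two generators $f_{2,1}$ and $f_{2,1/a}$, so that $G = \left < f_{2,1}, f_{2,1/a} \right >$; by Theorem \ref{thm:powertype} this is a quasiregular semigroup, and as every element is a composition of non-injective maps it is itself non-injective. By Lemma \ref{lem:juliasphere}, $J(f_{2,1}) = S(1)$ and $J(f_{2,1/a}) = S(a)$, so the two boundary spheres of the desired ring already appear as Julia sets of generators.

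The main device is to pass to the radial coordinate $s = \ln r$. Since $f_{d,\lambda}$ sends $S(r)$ onto $S(\lambda r^d)$, in this coordinate it acts by the affine map $s \mapsto ds + \ln\lambda$. Thus both generators act as $s \mapsto 2s + c$ with $c \in \{0, -L\}$, and since $\mathcal{F}$ is closed under composition (the law $A_{d_1,\lambda_1}A_{d_2,\lambda_2} = A_{d_1d_2,\lambda_1\lambda_2^{d_1}}$), composing generators corresponds exactly to composing these radial affine maps. Every $g \in G$ therefore lies in $\mathcal{F}$ and has $J(g) = S(e^{s^*})$, where $s^*$ is the fixed point of the corresponding radial map.

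The key computation is then the following. A word of length $k$ in the generators acts radially as $s \mapsto 2^k s + C$ with $C = -L\sum_{j \in P} 2^{k-j}$, where $P \subseteq \{1,\ldots,k\}$ records the positions at which $f_{2,1/a}$ was used. Its fixed value is $s^* = -C/(2^k - 1) = mL/(2^k - 1)$ with $m = \sum_{j\in P} 2^{k-j}$. As $P$ ranges over all subsets of $\{1,\ldots,k\}$, the binary bookkeeping shows that $m$ ranges over all of $\{0, 1, \ldots, 2^k - 1\}$, so the radial fixed points coming from length-$k$ words are exactly $\{mL/(2^k-1) : 0 \le m \le 2^k - 1\}$. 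Every such value lies in $[0, L]$, and as $k \to \infty$ the spacing $L/(2^k - 1) \to 0$; hence the set of all radial fixed points is contained in $[0, L]$ and dense there, so the set of Julia-set radii $\{e^{s^*}\}$ is contained in $[1,a]$ and dense in $[1,a]$.

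Finally I would assemble the conclusion via Proposition \ref{prop:8}, which gives $J(G) = \overline{\bigcup_{g \in G} J(g)} = \overline{\bigcup_{g\in G} S(r_g)}$, where each radius $r_g$ lies in $[1,a]$ and the $r_g$ are dense in $[1,a]$. A point $x$ with $|x| = \rho \in [1,a]$ is approximated by the rescalings $(r_{g_n}/\rho)x \in S(r_{g_n})$ for radii $r_{g_n} \to \rho$, while no point of modulus outside $[1,a]$ can lie in the closure since $[1,a]$ is closed; hence the closure is exactly $\{x : 1 \le |x| \le a\}$. There is no serious obstacle: the only point needing a word of care is the passage from density of the radii to density of the spheres in the solid ring, which is immediate from radial scaling. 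The entire content sits in the transparent dyadic structure of the fixed points, which simultaneously traps every invariant sphere inside the ring and makes them fill it densely.
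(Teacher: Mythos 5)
Your proof is correct, but it routes through different machinery than the paper's. Both arguments use the same generators $f_{2,1}$ and $f_{2,1/a}$ and Lemma \ref{lem:juliasphere}, but the paper never computes the Julia sets of the non-generator elements of $G$: it obtains the inclusion $\{x : 1\le|x|\le a\}\subseteq J(G)$ from backward invariance (Proposition \ref{prop:2}), pulling back the two boundary spheres $J(f_{2,1})=S(1)$ and $J(f_{2,1/a})=S(a)$ under words in the generators to produce the spheres $S(a^{j/2^m})$, $1\le j\le 2^m-1$, whose dyadic exponents are dense in $[0,1]$; and it obtains the reverse inclusion by observing that $\{|x|<1\}$ and $\{|x|>a\}$ are forward invariant under both generators, hence lie in $F(G)$ by normality (Theorem \ref{mini}). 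You instead invoke Proposition \ref{prop:8}, $J(G)=\overline{\bigcup_{g\in G}J(g)}$, and compute $J(g)$ for \emph{every} $g\in G$ explicitly via the radial conjugacy $s=\ln r$, under which a word of length $k$ acts as $s\mapsto 2^k s - mL$ and has fixed point $mL/(2^k-1)$ with $0\le m\le 2^k-1$ (whether the weights are $2^{k-j}$ or $2^{j-1}$ depends on your composition-order convention, but either way $m$ sweeps all of $\{0,\dots,2^k-1\}$, so this is harmless). Your version buys a complete enumeration of the Julia sets of all elements of $G$ — with exponents of denominator $2^k-1$ rather than $2^k$ — and delivers both inclusions in one stroke; the price is reliance on Proposition \ref{prop:8}, one of the heavier results of the paper (its proof requires Lemma \ref{lem:prop8} and the blow-up Lemma \ref{lem:blowup}), whereas the paper's argument needs only the elementary invariance Proposition \ref{prop:2}, Miniowitz's theorem, and Lemma \ref{lem:juliasphere}. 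Both are complete proofs of the statement.
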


\begin{proof}
Let $f = f_{2,1}$ and $g = f_{2,1/a}$ where $a>1$. Then $f,g \in \mathcal{F}$ and denote by $G = \left < f,g \right >$. Then $J(f) = S(1)$ and $J(g) = S(a)$ by Lemma \ref{lem:juliasphere}. Since $f^{-1}(S(r)) = S(\sqrt{r})$ and $g^{-1}(S(r)) = S(\sqrt{ar})$, we have
\[ f^{-1}(J(g)) = S(\sqrt{a}) = g^{-1}(J(f)),\]
and
\[ f^{-1}(S(\sqrt{a})) = S(a^{1/4}), \quad g^{-1}( S(\sqrt{a}) ) = S(a^{3/4}).\]
By induction and backward invariance, $S(a^{j/2^m}) \subset J(G)$ for $j=1,\ldots, 2^m-1$. Since $J(G)$ is closed, it follows that $ \{ 1\leq|x| \leq a\}\subset J(G)$.
Now, $\{|x| <1 \}$ is mapped into itself by both $f$ and $g$, and the same is true for $ \{ |x| > a \}$. Hence both these sets are contained in $F(G)$. It follows that $J(G) = \{1 \leq |x| \leq a \}$.
\end{proof}

The next example is a higher dimensional analogue of a result of Morosawa \cite{Morosawa} which shows that in dimension $n$, we can construct a quasiregular semigroup $G$ with Hausdorff dimension arbitrarily close to $n$. We remark that the Hausdorff dimension of $J(G)$ is strictly positive, since $\dim_H J(g) > 0$ by \cite[Theorem 1.2]{FN}.

\begin{proposition}
There exists a quasiregular semigroup $G$ in $\R^n$, for $n\geq 2$, so that $J(G)$ is, topologically, a product of a Cantor set and an $(n-1)$-sphere. Moreover, for any $\epsilon >0$, $G$ can be chosen so that $\dim _H J(G) > n-\epsilon$.
\end{proposition}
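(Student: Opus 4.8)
The plan is to exploit the radial symmetry of the power-type maps $f_{d,\lambda}$ and realize the radial dynamics as an affine iterated function system on the line. Concretely, I would fix a large integer $N\geq 2$ and take the $N-1$ generators
\[ g_j = f_{N,\, e^{-(j-1)}}, \qquad j=1,\ldots,N-1,\]
setting $G=\langle g_1,\ldots,g_{N-1}\rangle$, which is a quasiregular semigroup by Theorem \ref{thm:powertype} (each generator has degree $N\geq 2$, hence is non-injective). By Lemma \ref{lem:juliasphere} each $f_{d,\lambda}$ carries $S(r)$ to $S(\lambda r^d)$, so in the logarithmic radial coordinate $t=\ln|x|$ the map $g_j$ acts by the expanding affine map $\phi_j(t)=Nt-(j-1)$, whose unique inverse branch on radii is the contraction $\psi_j(t)=(t+(j-1))/N$. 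With this normalization $\psi_j([0,1])=[(j-1)/N,\,j/N]$, so the $\psi_j$ form a similarity IFS of ratio $1/N$ mapping $[0,1]$ onto the first $N-1$ of its $N$ equal subintervals; the omitted interval $((N-1)/N,1)$ guarantees that the attractor $A=\bigcup_j \psi_j(A)\subset[0,1]$ is a genuine Cantor set satisfying the open set condition, with similarity dimension $\dim_H A=\log(N-1)/\log N$.

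Next I would identify $J(G)$ with the spheres lying over $A$. Because $\mathcal{F}$ is closed under composition, every element of $G$ is again some $f_{D,\Lambda}\in\mathcal{F}$, and so by Lemma \ref{lem:juliasphere} each $J(g)$, $g\in G$, is a single sphere $S(\rho)$ whose radius satisfies $\ln\rho=$ the fixed point of the corresponding composition of the $\phi_j$. By Proposition \ref{prop:8},
\[ J(G)=\overline{\bigcup_{g\in G} J(g)}=\{\, x : \ln|x|\in\overline{P}\,\},\]
where $P$ is the set of fixed points of all finite compositions of the $\phi_j$. Since a composition $\phi_{j_1}\circ\cdots\circ\phi_{j_k}$ and its inverse $\psi_{j_k}\circ\cdots\circ\psi_{j_1}$ share the same fixed point, $P$ is exactly the set of periodic points of the contracting IFS, which is dense in $A$; hence $\overline{P}=A$ and
\[ J(G)=\{\, x\in\R^n : \ln|x|\in A\,\}=\{\, x : |x|\in T\,\}, \qquad T=\exp(A).\]

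Finally I would read off the topology and the dimension. The spherical-coordinate homeomorphism $x\mapsto(|x|,\,x/|x|)$ restricts to a homeomorphism $J(G)\to T\times S^{n-1}$, and since $T$ is homeomorphic to the Cantor set $A$, $J(G)$ is topologically a product of a Cantor set and an $(n-1)$-sphere. As $T\subset[1,e]$ is bounded away from $0$, this homeomorphism is bi-Lipschitz, so $\dim_H J(G)=\dim_H(T\times S^{n-1})$; moreover $\exp$ is bi-Lipschitz on the compact interval containing $A$, giving $\dim_H T=\dim_H A=\log(N-1)/\log N$. Because $S^{n-1}$ is a smooth manifold its Hausdorff and box dimensions coincide, so the product dimension formula yields
\[ \dim_H J(G)=\dim_H T+(n-1)=\frac{\log(N-1)}{\log N}+(n-1).\]
Given $\epsilon>0$, choosing $N$ large enough that $\log(N-1)/\log N>1-\epsilon$ forces $\dim_H J(G)>n-\epsilon$, completing the construction.

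I expect the genuinely delicate point to be justifying that the radial reduction captures $J(G)$ exactly, namely that $J(G)$ is precisely the union of spheres over $\overline{P}$ and that $\overline{P}=A$, rather than the dimension bookkeeping, which is standard. This rests on Proposition \ref{prop:8} together with the density of periodic points of the contracting IFS in its attractor; the remaining care is only in verifying the open set condition (clear here from the disjoint base-$N$ subintervals), so that the similarity dimension equals the Hausdorff dimension, and in invoking the product formula $\dim_H(T\times S^{n-1})=\dim_H T+(n-1)$, valid since $S^{n-1}$ has equal Hausdorff and upper box dimension.
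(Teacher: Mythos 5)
Your construction is correct, and it runs on the same engine as the paper's proof: power-type solutions of the Schr\"oder equation (Theorem \ref{thm:powertype}), reduction of the radial dynamics to an affine IFS in logarithmic coordinates, and a Moran-type dimension computation. The differences are in the generators and in how the identification of $J(G)$ is justified. The paper follows Morosawa and takes generators of \emph{distinct} degrees, $p_k=f_{2^k,2^{2-2^k}}$ for $k=1,\dots,N-1$ together with $q_N=f_{2^N,2^{1-2^N}}$, so the radial contractions have ratios $2^{-k}$ and the attractor dimension is only implicitly determined by the Moran equation $\sum_{k=1}^{N-1}2^{-ks}+2^{-Ns}=1$; the identification of $J(G)$ with the resulting set of spheres is deferred to \cite{Morosawa}. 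You instead take $N-1$ generators all of degree $N$, which produces a self-similar IFS with equal ratios $1/N$ and the explicit dimension $\log(N-1)/\log N$, and---more significantly---you make the identification of $J(G)$ self-contained by combining Proposition \ref{prop:8} with the fact that fixed points of finite compositions of the contractions are dense in the attractor, a step the paper never spells out. Two small points to tighten. First, the open set condition with adjacent intervals does not by itself force total disconnectedness (the full digit set $\{0,\dots,N-1\}$ also satisfies OSC, with attractor $[0,1]$); what actually makes $A$ a Cantor set is that $A\subset[0,(N-2)/(N-1)]$, so the images $\psi_j(A)$ are pairwise separated by gaps of width $1/(N(N-1))$. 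Second, you need $N\geq 3$ so that there are at least two contractions (for $N=2$ the attractor is a single point and $J(G)$ a single sphere); this is harmless since you take $N$ large for the dimension bound anyway.
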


In \cite{Morosawa} and in dimension two, such Julia sets are called Cantor targets. In higher dimensions, we might call them Cantor shells.

\begin{proof}
For $k\in \N$, let $p_k = f_{2^k,2^{2-2^k}} \in \mathcal{F}$ and $q_k = f_{2^k,2^{1-2^k}} \in \mathcal{F}$. By Lemma \ref{lem:juliasphere}, we have $J(p_k) = S(2^{c_k})$, where $c_k = \frac{2^k-2}{2^k-1}$ and $J(q_k) = S(2)$.

For $N\in \N$, let $G = \left < p_1,\ldots, p_{N-1}, q_N \right >$. Then it is not hard to see that $\{ |x| <1 \} \cup \{ |x| >2 \} \subset F(G)$. Next, consider the logarithm to base $2$ of the inverses of the radial parts of $p_k$ and $q_N$, that is
\[ \varphi_k(t) = \frac{t}{2^k}+1 -\frac{1}{2^{k-1}}, \quad \phi_N(t) = \frac{t}{2^N} +1 - \frac{1}{2^n}.\]
One can then show, as in \cite{Morosawa}, that the iterated function system (IFS) generated by $\{ \varphi_1, \ldots, \varphi_{N-1}, \phi_N \}$ satisfies Moran's open set condition which makes the Hausdorff dimension of the attractor set computable. For $N$ chosen large enough, one can show that $J(G)$ is arbitrarily close to $n$. Again, we refer to \cite[Theorem 6]{Morosawa} for the details of this argument.
\end{proof}

\subsection{Antoine's necklaces}

We briefly recall the uqr map constructed in \cite{FW}, which has a particular wild Cantor set, called an Antoine's necklace, for its Julia set. The construction is in dimension three and it is still open as to whether there exist analogous uqr constructions in dimension at least four.

Let $X_0 \subset B(0,2) \subset \R^3$ be a solid torus and let $m\geq 4$ be a square even integer, say $m=d^2$. Choose mutually disjoint solid tori $X_{1,1},\ldots, X_{1,m}$ contained in the interior of $X_0$ so that $X_{1,i}$ and $X_{1,j}$ are linked if and only if $|i-j| \equiv 1 (\operatorname{mod} m)$ and, when linked, they form a Hopf link. Let $\varphi_j :X_0 \to X_{1,j}$ be linear maps, for $j=1,\ldots, m$ and define
\[ X_1 = \bigcup_{j=1}^m X_{1,j} = \bigcup_{j=1}^m \varphi_j(X_0).\]
The required uqr map $f$ is constructed as follows:
\begin{enumerate}[(i)]
\item on $X_1$, $f$ is defined via the various linear maps $\varphi_j^{-1}$,
\item $f$ is a particular branched covering $\overline{X_0 \setminus X_1} \to \overline{B(0,2) \setminus X_0}$,
\item outside $B(0,2)$, we set $f$ to be a uqr power-type map $f_{d,1}$ of degree $d^2$, recalling the previous section,
\item finally on $B(0,2) \setminus \operatorname{int}(X_0)$, we use an extension theorem of Berstein and Edmonds.
\end{enumerate}
We refer to \cite{FW} for more details. The Julia set for $f$ is then the attractor for the IFS generated by the maps $\varphi_1,\ldots,\varphi_m$ and is a wild Cantor set. If an orbit remains on $J(f)$, then $f$ only ever acts by M\"obius maps. Otherwise, $f$ acts by finitely many M\"obius maps, two quasiregular maps, and then a uqr map, with some of these steps possibly omitted. It follows that $f$ is uqr.

This construction is far from unique, since $m$ just needs to be a sufficiently large even square integer, and the exact configuration of the smaller tori within the larger gives plenty of freedom. Let us then fix $X_0$ and consider a collection $f_1,\ldots, f_k$ of uqr maps constructed as above.

\begin{theorem}
\label{thm:antoine}
Let $G = \left < f_1,\ldots, f_k \right>$ act on $S^3$, with $f_i$ as above. Then $G$ is a quasiregular semigroup and $J(G)$ is the attractor set for the IFS generated by the collection $\varphi^i_j$ of linear maps used to construct $f_i$.
\end{theorem}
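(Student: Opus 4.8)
The plan is to establish the two assertions separately: first that $G$ is a $K'$-quasiregular semigroup for some $K'\geq 1$, and then that $J(G)$ equals the attractor $A$ of the combined iterated function system $\Phi = \{\varphi^i_j : 1\leq i\leq k,\ 1\leq j\leq m_i\}$. The geometric fact driving both parts is that $f_1,\ldots,f_k$ share the same solid torus $X_0$ and the same ball $B(0,2)$; only the internal configurations $X_1^i=\bigcup_j X_{1,j}^i$ inside $X_0$ differ. Thus every $f_i$ respects the same partition of $S^3$ into the tori $X_1^i$, the gap $\overline{X_0\setminus X_1^i}$, the annulus $\overline{B(0,2)}\setminus\operatorname{int}(X_0)$, and the exterior $S^3\setminus\overline{B(0,2)}$. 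I assume, as is natural, that the exterior power-type parts are all drawn from the same family $\mathcal{F}$ of Zorich-type solutions of the previous section.

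For the first assertion I would analyse the itinerary of an arbitrary point $x$ under an arbitrary word $g=f_{i_1}\circ\cdots\circ f_{i_N}$. On $X_1^i$ each generator is a M\"obius map (contributing dilatation $1$), on $\overline{X_0\setminus X_1^i}$ and the annulus it has dilatation bounded by a fixed constant $K_0$ (taken over the finite generating set), and on the exterior it equals $f_{d_i,1}\in\mathcal{F}$. As in the single-map discussion recalled above, once the orbit of $x$ leaves the tori it reaches the exterior within a uniformly bounded number (in fact at most two) of bounded-distortion steps, and it then remains there because $f_{d_i,1}(\{|x|>2\})\subseteq\{|x|>2\}$. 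The decisive point is that on the exterior every generator acts through $\mathcal{F}$, and $\mathcal{F}$ is closed under composition with the uniform bound $K^2$ of the proof of Theorem \ref{thm:powertype}; hence the entire exterior portion of the itinerary composes to a single $K^2$-quasiregular map of $\mathcal{F}$ instead of accumulating distortion step by step. Estimating the local dilatation of $g$ at $x$ as the product of the M\"obius part (factor $1$), the at most two transitional steps (factor $\leq K_0^2$) and the exterior part (factor $\leq K^2$) gives $K(g)\leq K^2K_0^2$ uniformly in $g$ and $x$. Since each $g$ has degree at least $4$, hence is non-injective, and $G$ is closed under composition by construction, $G$ is a quasiregular semigroup. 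I expect this uniform bound to be the main obstacle: a naive estimate on the exterior would produce the unbounded product $(K^2)^p$, and only the closure of $\mathcal{F}$ under composition rescues the argument.

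For the second assertion, let $A$ be the attractor of $\Phi$, the unique non-empty compact set with $A=\bigcup_{i,j}\varphi^i_j(A)$; it is closed and, since it contains the Antoine necklace $J(f_1)$, it is an uncountable Cantor set and in particular has at least $q$ points. As each $\varphi^i_j$ maps $X_0$ into its interior, $A\subseteq\operatorname{int}(X_0)$. I would first show that $A$ is backward invariant under $G$: for $a\in A$, the only region that any $f_i$ maps into $\operatorname{int}(X_0)$ is $X_1^i$ (the gap maps to the annulus, and the annulus and exterior map to the exterior), so $f_i^{-1}(a)=\{\varphi^i_j(a):j\}\subseteq\bigcup_j\varphi^i_j(A)\subseteq A$. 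By Proposition \ref{prop:6} this yields $J(G)\subseteq A$. For the reverse inclusion I would invoke Proposition \ref{prop:8}, so that $J(G)=\overline{\bigcup_{g\in G}J(g)}$. Given $a\in A$ and $n\in\N$, choose a length-$n$ word $w$ in $\Phi$ with $a\in\varphi_w(X_0)$; the composition $\varphi_w$ is an inverse branch of a corresponding element $g\in G$, that is $g\circ\varphi_w=\operatorname{id}$ on $X_0$. Since $J(g)\subseteq J(G)\subseteq A\subseteq X_0$ and $J(g)$ is completely invariant for the single uqr map $g$, we get $\varphi_w(J(g))\subseteq J(g)$, so $J(g)$ meets $\varphi_w(X_0)$. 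Because $\diam(\varphi_w(X_0))\to 0$ as $n\to\infty$ while $a\in\varphi_w(X_0)$, these points of $J(g)$ converge to $a$, whence $a\in\overline{\bigcup_g J(g)}=J(G)$. Combining the two inclusions gives $J(G)=A$.
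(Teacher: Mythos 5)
Your proof is correct, and on the semigroup half it is essentially the paper's argument with the bookkeeping made explicit: both you and the paper track itineraries through the common decomposition of $S^3$ (M\"obius action on the tori, at most two bounded-distortion transitional steps, then the exterior of $B(0,2)$), and both hinge on the fact that the exterior parts lie in the composition-closed family $\mathcal{F}$ of Theorem \ref{thm:powertype}, so that the exterior portion of any word costs a single factor $K^2$ rather than a factor per step; your bound $K(g)\leq K^2K_0^2$ is exactly what the paper leaves implicit, as is your (correctly flagged) assumption that all exterior parts come from the same Zorich map $h$. Where you genuinely diverge is the identification of $J(G)$ with the attractor $A$. The paper argues only the inclusion $J(G)\subseteq A$, and does so by normality: a point off the attractor has a neighbourhood whose forward orbit omits the infinitely many points of $J(f_1)$, hence lies in $F(G)$ by Theorem \ref{mini}; the reverse inclusion is not spelled out (it can be extracted from backward invariance of $J(G)$ applied to $\varphi_w(J(f_1))$, but the paper does not do this). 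You instead obtain $J(G)\subseteq A$ from backward invariance of $A$ (using $f_i^{-1}(\operatorname{int}(X_0))\subseteq X_1^i$, so $f_i^{-1}(a)=\{\varphi^i_j(a) : j\}\subseteq A$) together with the minimality characterization of Proposition \ref{prop:6}, and then you prove $A\subseteq J(G)$ outright via Proposition \ref{prop:8}: for a length-$n$ word the branch $\varphi_w$ satisfies $g\circ\varphi_w=\operatorname{id}$ on $X_0$ for the reversed word $g\in G$, complete invariance of $J(g)$ gives $\varphi_w(J(g))\subseteq J(g)$, so $J(g)$ meets $\varphi_w(X_0)$, and since $\diam(\varphi_w(X_0))\to 0$ these Julia points accumulate at any prescribed $a\in A$. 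The trade-off: the paper's Montel argument is shorter and needs only Theorem \ref{mini}, while your route leans on the semigroup machinery of Sections 3 and 4 but in return delivers both inclusions; in particular your inverse-branch argument supplies precisely the direction $A\subseteq J(G)$ that the paper's proof glosses over.
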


\begin{proof}
Each $f_i$ has degree $m_i$, where $m_i = d_i^2$ is a sufficiently large square even integer, and the Julia set of $f_i$ is the attractor for the IFS generated by the linear maps $\varphi^i_1,\ldots, \varphi^i_{m_i}$. Denote by $X_1^i$ the set $X_1$ in the above construction for $f_i$.

On $S^3 \setminus B(0,2)$, $G$ is a family of power-type maps generated by $f_{d_i,1}$ and hence by Theorem \ref{thm:powertype} and Lemma \ref{lem:juliasphere}, $S^3 \setminus B(0,2) \subset F(G)$. Next, let $x_0 \in B(0,2) \setminus \bigcup_{i\in I} X_1^i$. Then no matter what elements of $G$ act on $x_0$, after two iterations, $x_0$ is mapped outside $B(0,2)$. Otherwise, $x_0$ is acted on by M\"obius maps, either infinitely often or finitely many times before being mapped into $B(0,2) \setminus \bigcup_{i\in I} X_i^i$. It follows that $G$ is a quasiregular semigroup.

The claim about $J(G)$ follows, since if $x_0$ is not in the attractor set for $\varphi_j^i$, then we can find $\delta >0$ so that $O^+(B(x_0,\delta))$ omits the infinitely many points in, say, $J(f_1)$. This means that $x_0 \in F(G)$.
\end{proof}

\subsection{Conformal traps}

Conformal traps were introduced in \cite{IM} and yielded the first examples of uqr maps constructed in $S^n$ for $n\geq 3$. One of the features such uqr maps have is that the Julia set is a tame Cantor set. We briefly recall how one can modify a given quasiregular map to obtain a uqr map with a conformal trap following the presentation in \cite{MP}.

Starting with any non-injective quasiregular map $f:S^n \to S^n$, for $n\geq 2$, and of degree $d$, choose $x_0 \in S^n$ satisfying the following properties:
\begin{enumerate}[(i)]
\item $\{ x_0, f(x_0), f^{-1}(x_0)\} \cap ( B_f \cup \{ \infty \} )  = \emptyset$,
\item there exists a Euclidean ball $U_0 := B(x_0,r)$ so that $f^{-1}(U_0)$ has components $U_1,\ldots, U_d$ and $f:U_j \to U_0$ is injective for $j=1,\ldots, d$,
\item $f(U_0) \cap \bigcup_{i=0}^d U_i = \emptyset$.
\end{enumerate}

Let $f^{-1}(x_0) = \{x_1,\ldots, x_d\}$ and choose $a,b>0$ with $2b<a$ and
\begin{enumerate}[(i)]
\item $B(x_i ,a) \subset U_i$ for $i=0,\ldots, d$,
\item $B(f(x_0),a) \subset f(U_0)$,
\item $B(x_0,b) \subset \bigcap_{i=1}^d f(B(x_i,a))$,
\item $f(B(x_0,b)) \subset B(f(x_0),a)$.
\end{enumerate}

On $S^n \setminus \bigcup_{i=0}^d B(X_i,a)$, we now set $\widetilde{f} = f$. For $i=1,\ldots, d$ set $\widetilde{f} |_{B(x_i,b)}$ to be a translation onto $B(x_0,b)$, and set $\widetilde{f}|_{B(x_0,b)}$ to be a translation onto $B(f(x_0,b))$. On each $B(x_i,a) \setminus B(x_i,b)$, apply Sullivan's Annulus Theorem to find a quasiconformal extension defining $\widetilde{f}$ everywhere.

Next let $\Phi :S^n \to S^n$ be a M\"obius involution exchanging $B(x_0,b)$ with its complement and then define $g = \Phi \circ \widetilde{f}$. The map $g :S^n \to S^n$ is uqr. To see this, on $B(x_0,b)$, $\widetilde{f}$ is a translation onto $B(f(x_0),b)$ and $\Phi$ is then a conformal map. Hence $g$ conformally maps $B(x_0,b)$ into itself. Hence $g$ has an attracting fixed point in $B(x_0,b)$. On $S^n \setminus \bigcup_{i=1}^d B(x_i,a)$, $\widetilde{f}$ may have distortion, but then $\Phi$ will map the image into $B(x_0,b)$, on which we know the iterates of $g$ act conformally. Finally, for $i=1,\ldots, d$, $g$ acts on $B(x_i,a)$ by a conformal map, say $\varphi_i$, with image $S^n \setminus \overline{B(x_0,a)}$.

The Julia set of $g$ is then the attractor set of the IFS generated by $\varphi_1^{-1},\ldots, \varphi_d^{-1}$, and can be shown to be a tame Cantor set.

\begin{theorem}
\label{thm:traps}
Suppose $g_1,\ldots, g_k$ are uqr maps in $S^n$, for $n\geq 2$, constructed as above with the same conformal trap $B(x_0,b)$, for $b>0$. Then $G = \left < g_1,\ldots, g_k \right >$ is a quasiregular semigroup and $J(G)$ is an attractor set for an IFS generated by M\"obius maps.
\end{theorem}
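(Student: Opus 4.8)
The plan is to split the argument into two parts: first, that $K(g)$ is uniformly bounded over all $g \in G$, so that $G$ is genuinely a quasiregular semigroup; and second, the identification of $J(G)$ with the attractor $\mathcal{A}$ of the combined iterated function system.

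For the first part I would record the three regimes exhibited by each generator $g_i = \Phi \circ \widetilde{f}_i$, all governed by the common trap $B(x_0,b)$ and hence by the common involution $\Phi$: (a) on $B(x_0,b)$ the map $g_i$ is conformal and $g_i(B(x_0,b)) \subset B(x_0,b)$; (b) on each active ball $B(x^i_j,a_i)$, $j = 1, \dots, d_i$, the map $g_i$ agrees with the Möbius map $\varphi^i_j$, which carries $B(x^i_j,a_i)$ onto $S^n \setminus \overline{B(x_0,a_i)}$; and (c) on the remaining set $g_i$ may distort, but $\Phi$ sends the image into $B(x_0,b)$. The key observation is that distortion is created at most once along any orbit. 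Given a word $g = g_{i_m} \circ \cdots \circ g_{i_1}$ and a point $x$, set $y_0 = x$ and $y_t = g_{i_t}(y_{t-1})$; the local dilatations satisfy $K_O(g)(x) \le \prod_{t=1}^m K_O(g_{i_t})(y_{t-1})$, and similarly for $K_I$. Each factor equals $1$ in regimes (a) and (b), since $g_{i_t}$ is conformal there. A factor can exceed $1$ only in regime (c), but then $y_t \in B(x_0,b)$, and since the trap is forward invariant and conformal for \emph{every} generator, all subsequent steps fall under regime (a). Hence at most one factor differs from $1$, giving $K(g) \le \max_i K(g_i) =: K$ for every $g \in G$, so $G$ is a $K$-quasiregular semigroup. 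I expect this to be the heart of the matter, and the essential use of the hypothesis that the trap $B(x_0,b)$ is common to all generators is precisely that a point, once trapped, remains conformal under every generator thereafter.

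For the second part, let $\mathcal{A}$ denote the attractor of the IFS generated by the contractions $(\varphi^i_j)^{-1}$, $1 \le i \le k$, $1 \le j \le d_i$; it is a Cantor set, disjoint from $B(x_0,b)$ and contained in $\bigcup_{i,j} B(x^i_j,a_i)$. I would identify $J(G)$ with $\mathcal{A}$ through Proposition \ref{prop:8}, by showing $\overline{\bigcup_{g \in G} J(g)} = \mathcal{A}$. For the inclusion into $\mathcal{A}$: any $x \in J(g)$, where $g = g_{i_m} \circ \cdots \circ g_{i_1}$, must have a sub-orbit $\{y_t\}$ that never meets the trap, since a visit to $B(x_0,b)$ would force convergence to the attracting fixed point there and place $x$ in $F(g)$. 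By the trichotomy this means every sub-step lies in regime (b), so the full sub-orbit of $x$ stays in the active balls and $x \in \mathcal{A}$. For the reverse inclusion, truncating and periodically repeating the address of a point of $\mathcal{A}$ produces repelling fixed points of suitable compositions $g \in G$; these lie in $J(g)$ and, being dense in $\mathcal{A}$ by the standard theory of contracting IFS, show $\mathcal{A} \subseteq \overline{\bigcup_{g} J(g)}$.

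The main obstacle I anticipate is bookkeeping rather than conceptual: verifying that the trichotomy holds off a set of measure zero (the sphere boundaries), and checking that the active balls of all generators are mutually disjoint from each $\overline{B(x_0,a_i)}$, so that the inverse branches $(\varphi^i_j)^{-1}$ genuinely compose to a contracting IFS with attractor $\mathcal{A}$ and so that the expansion underlying the repelling fixed points is uniform. Once the three regimes and the forward invariance of the shared trap are in hand, the distortion bound of the first part is clean, and the identification of $J(G)$ reduces to the address-space description of $\mathcal{A}$ together with Proposition \ref{prop:8}.
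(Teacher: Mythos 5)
Your proposal is correct and takes essentially the same route as the paper: the same trap trichotomy (conformal on the forward-invariant common trap, M\"obius on the active balls, distortion incurred at most once before the orbit lands in the trap) yields the uniform dilatation bound, and $J(G)$ is then identified with the attractor of the combined IFS. The paper's own proof is much terser---it asserts the attractor identification in a single line---so your argument via Proposition \ref{prop:8}, the orbit-avoids-the-trap observation, and the density of repelling periodic points merely fills in details the paper leaves implicit, rather than following a different method.
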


\begin{proof}
On $\overline{B(x_0,b)}$, $G$ acts by conformal maps, whereas if $x_0$ is outside $B(x_0,b)$, any element of $G$ either acts conformally, or maps $x_0$ into $B(x_0,b)$. We conclude that $G$ is a quasiregular semigroup.

If $J(g_i)$ is the attractor set for the IFS generated by $\varphi_{i,1}^{-1} ,\ldots, \varphi_{i,d_i}^{-1}$, then $J(G)$ is the attractor set for the IFS generated by $\bigcup_{i=1}^k \{ \varphi_{i,1}^{-1} ,\ldots, \varphi_{i,d_i}^{-1} \}$.
\end{proof}

\end{document}